\newcommand{\so}{{\mathfrak s \mathfrak o}}
\newcommand{\ssl}{{\mathfrak s \mathfrak l}}
\newcommand{\g}{{\mathfrak g}}         % Lie algebra of G
\newcommand{\cx}{{\mathbb C}}
\newcommand{\Ad}{\operatorname{Ad}}
\newcommand{\tr}{\operatorname{tr}}
\newcommand{\codim}{\operatorname{codim}}
\newcommand{\End}{\operatorname{End}}
\newcommand{\Mat}{\operatorname{Mat}}
\newcommand{\Sing}{\operatorname{Sing}}
\newcommand{\Reg}{\operatorname{Reg}}
\newcommand{\ol}{\overline}
\numberwithin{equation}{section}
\newtheorem{theorem}{Theorem}[section]
\newtheorem{corollary}[theorem]{Corollary}
\newtheorem{proposition}[theorem]{Proposition}
\theoremstyle{remark}
\newtheorem{remark}[theorem]{Remark}
\newtheorem{definition}[theorem]{Definition}
\newtheorem*{ack}{Acknowledgment}
\newcommand{\C}{{\mathbb{C}}}
\newcommand{\R}{{\mathbb{R}}}
\newcommand{\oC}{{\mathbb{C}}}
\newcommand{\oH}{{\mathbb{H}}}
\newcommand{\oK}{{\mathbb{K}}}
\newcommand{\oP}{{\mathbb{P}}}
\newcommand{\oR}{{\mathbb{R}}}
\newcommand{\oZ}{{\mathbb{Z}}}
\newcommand{\sD}{{\mathcal{D}}}
\newcommand{\sE}{{\mathcal{E}}}
\newcommand{\sN}{{\mathcal{N}}}
\newcommand{\sO}{{\mathcal{O}}}
\begin{document}

\title{Hypercomplex analytic spaces and schemes}
\author{Roger Bielawski }
\address{Institut f\"ur Differentialgeometrie,
Leibniz Universit\"at Hannover,
Welfengarten 1, 30167 Hannover, Germany}

%\address{Department of Mathematics, University College London, London WC1E 6BT, United Kingdom}
%\email{R.Bielawski@ed.ac.uk}
%\date{\today}

\subjclass[2020]{32C05, 32C15, 32L25, 32G13, 53C26, 53C28}
%\date{\today}

%\thanks{}
%\dedicatory{}

\begin{abstract} We propose definitions of hypercomplex analytic spaces and hypercomplex schemes. We show that   such a hypercomplex space is canonically associated to the quotient of a hypercomplex manifold by a finite group action.
\end{abstract}

\maketitle

\thispagestyle{empty}

%\section{Introduction}

The geometry based on the largest of real division algebras, i.e.\ on the quaternions $\oH$, has a very different flavour, owing to the noncommutativity. In particular, it cannot be based on ``quaternionic coordinates", since a quaternion-analytic map $\oH^n\to \oH^n$ is necessarily affine. Instead, one defines {\em hypercomplex manifolds} as real manifolds admitting an action of quaternions on the tangent bundle, such that the almost complex structures corresponding to unit imaginary quaternions are integrable. This definition does not adapt well to the singular setting. On the other hand there are many natural examples of ``singular spaces" which should carry some sort of hypercomplex (or hyperk\"ahler) structure. Let us mention three classes of such examples:
\begin{itemize}
\item quotients of hypercomplex manifolds by a non-free triholomorphic action of a finite group;
\item singular hyperk\"ahler quotients (cf., e.g., \cite{DS, May});
\item Coulomb branches in $3$-dimensional $N=4$ supersymmetric gauge theories (see, e.g., \cite{Nak}).
\end{itemize}
%%%
In each of these cases,  the  singular object has a natural structure of a {\em stratified} hypercomplex or hyperk\"ahler manifold. This, however, is a very weak notion, rather far from algebraic or analytic varieties. The 
aim of this paper is to propose  a definition of ``singular hypercomplex" in both the (real) analytic and the algebraic category. We shall concentrate mostly on {\em hypercomplex analytic spaces}, because the algebraic notion is too restrictive; for example, it does not include  the hypercomplex structure of the Taub-NUT metric on $\R^4$, or the flat hypercomplex structure on $S^1\times \oR^3$. The latter example, together with the description of an open dense subset of the smooth locus of a Coulomb branch given in   \cite{BDG},  also shows that  the hypercomplex structure of a Coulomb branch is never algebraic. 
\par
Let us briefly recall the previous work on this topic. A notion of a hypercomplex variety has been proposed by Verbitsky \cite{Ver}. It is a direct generalisation of the definition of a hypercomplex manifold, requiring an action of quaternions on the cotangent sheaf such that the unit imaginary quaternions define integrable complex structures. As shown by Verbitsky, this definition is very restrictive: hypercomplex singularities in this sense can always be resolved by normalisation. In particular the quotient of a hypercomplex manifold by a non-free action of a finite group is never a hypercomplex variety in this sense. 
\par
There is also Joyce's {\em hypercomplex algebraic geometry} \cite{Joyce}, which associates certain kind of quaternionic algebras (called H-algebras) to  hypercomplex manifolds. However, as Joyce himself points out \cite[p.\ 140]{Joyce}, the geometry of the variety associated to an arbitrary H-algebra may not be hypercomplex.
\par
So what should a singular hypercomplex space or scheme be? 
%Assume, for simplicity, that $X$ is normal in the sense that it has a normal complexification. 
We clearly should have the usual hypercomplex structure on the set $\Reg(X)=X\setminus\Sing(X)$ of regular points. This implies that the sheaf $\varOmega^{[1]}_X=i_\ast \varOmega^1_{\Reg(X)}$  is quaternionic (where $i:\Reg(X)\to X$ is the embedding), i.e.\ $\End_{\sO_X}\bigl({\varOmega}^{[1]}_X\bigr)$ contains a subalgebra isomorphic to quaternions. In addition, we need a notion of integrability for almost complex structures on $\varOmega^{[1]}_X$.
%If $X$ is nice enough (normal in the complex category; in the real category there are various additional pathologies, which need to be excluded), then the sheaf $\varOmega^{[1]}_X$ is reflexive and isomorphic to $(\varOmega^1_X)^{\ast\ast}$. We shall refer to $\varOmega^{[1]}_X$ as the sheaf of reflexive $1$-forms.
 Consider first a real analytic manifold $M$ with an almost complex structure $I:TM\to TM$. The integrability of $I$ can be interpreted as follows: there is a complex thickening $\tilde M$ of $M$ such that the distribution given by the  $-i$-eigenbundle $T^{-}\tilde M\subset T\tilde M$  of $I$ is integrable, i.e.\ $T_x^{-}\tilde M$ is the tangent space to a complex submanifold (of dimension $\frac{1}{2}\dim \tilde M$) of $\tilde M$ at every $x\in \tilde M$.  Let now $X$ be a pure-dimensional real analytic space (or algebraic scheme) with an almost complex structure $I$ on the sheaf $\varOmega^{[1]}_X$. Let $\tilde X$ be a complexification of $X$ and, assuming that $\varOmega^{[1]}_X$ is coherent,  consider the linear complex space (resp.\ linear scheme) $L^{-}$ over $\tilde X$, corresponding to the sheaf $\varOmega^{[-]}_{\tilde X}=i_\ast\varOmega^{-}_{\Reg(\tilde X)}$. It is a linear  subspace of the  tangent linear space (resp.\ scheme) $T\tilde X$, and we require, analogously to the smooth case, that at every $x\in \tilde X$, the fibre $L^{-}_x$ is equal to the Zariski tangent space of a complex subspace (resp.\ subscheme) of dimension $\frac{1}{2}\dim\tilde X$.
 Finally, we also require  that the intersection of fibres $L_x^{-}\subset T_x\tilde X$, corresponding to two different unit imaginary quaternions, remains trivial at singular points of $X$ (although spaces not satisfying this condition are also interesting; we shall call them {\em weakly hypercomplex}).
 \par
 This is then, in essence, our definition of a {\em normal}\hspace{1pt} hypercomplex space or scheme: a linear quaternionic structure on $\varOmega^{[1]}_X$, satisfying the above nondegeneracy condition at points of $\Sing(X)$, and such that the almost complex structures corresponding to unit imaginary quaternions are integrable in the above sense.
 We shall, however, formulate the definition in different terms, which do not require normality.
 \par
 We remark that this notion of integrability does not necessarily make $X$ into a complex space (or scheme). In the smooth case, one can guarantee, by making $\tilde M$ smaller if necessary, that the foliation of $\tilde M$ induced by $T^{-}\tilde M$ is simple, and therefore the space of leaves is diffeomorphic to $M$. This is, in general, not possible in the singular case: if $X$ is locally irreducible and  $\Sing(X)$ disconnects $X$ locally, then we cannot make $X$ into a complex space (with the same underlying real analytic structure). It seems that, in such a case, the best we can hope for, is the existence of a complex space $Z_I$ and a surjective morphism $\Phi_I$ from $X$ to the underlying real analytic space $Z_I^\R$ of $Z_I$, such that $\Phi_I$ has discrete fibres and is unbranched away from $\Sing(X)$. Indeed, it turns out that our definition of hypercomplex space guarantees the existence of such $Z_I$ for each unit imaginary quaternion, provided the complexification $\tilde X$ of $X$ is normal. We obtain then a ``twistor space" $\pi:Z\to \oP^1$ for $X$, equipped with an antiholomorphic involution $\sigma$ covering the antipodal map. There is a natural map $\Phi:X\times S^2\to Z^\R$ with discrete fibres, which identifies $X$ with an analytic subspace of  the Douady space $\Gamma(Z)^\sigma$ of $\sigma$-invariant sections of $\pi$. 
 \par
 Conversely, if $X$ is a real analytic subspace of $\Gamma(Z)^\sigma$ such that $\Reg(X)$ is open in $\Reg\bigl(\Gamma(Z)^\sigma\bigr)$ and the natural maps from $X$ to the fibres of $Z$, given by the intersection of a section with a fibre, are surjective with discrete fibres and unbranched away from $\Sing(X)$, then $X$ is a hypercomplex space.
 \par
 On the other hand, if $\Sing(X)$ does not disconnect $X$ locally, then we can expect $X$ to be canonically a complex space for each unit imaginary quaternion. We show that this is the case provided the complexification $\tilde X$ is normal and the map $\Phi$ is finite. This latter assumption is worth making even in the case when $\Sing(X)$  locally disconnects $X$. Hypercomplex spaces with finite $\Phi$ are, in a way,  the closest relatives of hypercomplex manifolds, and we shall call them {\em proper}. For example, a proper hypercomplex space $X$ with a twistor space $Z$ is isomorphic to a union of irreducible components of $\Gamma(Z)^\sigma$.
\par
We then discuss examples. We show that we can associate a canonical hypercomplex space  to a  finite quotient of a hypercomplex manifold. We cannot expect  the  quotient itself to be always a hypercomplex space or variety, for the simple reason that it does not even have to be  a real analytic space/variety. Consider the simplest example of 
$\oH/{\pm 1}$. If we identify $\oH$ with $\oR^4$ with coordinates $x_0,x_1,x_2,x_3$, then the invariants are the monomials $x_ix_j$, and $\oR^4/{\pm 1}$ can be identified with the {\em semialgebraic} variety of real symmetric $4\times 4$ matrices of rank $\leq 1$ and nonnegative trace. Its Zariski closure is the variety of all real symmetric $4\times 4$ matrices of rank $\leq 1$, which is the union of two copies of $\oH/{\pm 1}$ intersecting in a point, and this is the variety which we can reasonably expect to be hypercomplex. In fact, this variety is an example in Joyce's  hypercomplex algebraic geometry \cite[p.\ 161]{Joyce}. 
%\par Let us mention that there is  also a notion of a {\em stratified hyperk\"ahler} and, analogously,  of a stratified hypercomplex manifold \cite{DS, May}. As for every stratification, this is a very weak notion. 
\par
Generalising this example, we show that there is a canonical proper hypercomplex space $\ol{M/G}$ associated  to a hypercomplex manifold $M$ with a  triholomorphic action of a finite group $G$. It is given as the real part of $M^\C/G$, where $M^\C$ is the connected component containing $M$ of the Douady space of sections of the twistor space of $M$.  The quotient $M/G$ is a closed subset (not necessarily a subspace) of $\ol{M/G}$. If the order of every element of $G$ is odd, then $\ol{M/G}=M/G$, so that $M/G$ is a hypercomplex space.
\par
A different generalisation of the  example $\oH/{\pm 1}$, also yielding  proper hypercomplex spaces, is given by conical complex spaces such that the underlying real analytic space has a compatible stratification into hypercomplex manifolds. We show that to such a complex space $Y$ we can associate a hypercomplex space $X$, homeomorphic to the union of two copies of $Y$ joined at the common cone vertex. In particular, the union of two copies of the nilpotent variety of a complex semisimple Lie algebra, glued together at the origin, is a proper integral hypercomplex scheme.

\begin{ack} I thank Daniel Greb for comments and reference \cite{HHL}.
\end{ack}

\section{Background material}
\subsection{Analytic spaces\label{ana-spaces}}

General references for the material in this subsection are \cite{VII} for complex and \cite{GMT} for real analytic spaces.
\par
Let $\oK=\oR$ or $\oC$. A {\em $\oK$-model space} is a ringed space $(X,\sO_X)$ where $X$ is the common zero-set of finitely many $\oK$-analytic functions $f_1,\dots,f_n$ defined on an open subset $U$ of $\oK^N$, and $\sO_X=\sO_U/(f_1\sO_U+\dots +f_n\sO_U)\big|_X$ ($\sO_U$ denotes the sheaf of germs of analytic functions on $U$). A $\oK$-analytic space is a second countable Hausdorff ringed space $(X,\sO_X)$ which is locally isomorphic to model spaces.  It follows from the definition that any real analytic space has a {\em complexification}, i.e.\ complex analytic space $(\tilde X, \sO_{\tilde X})$ equipped with an antiholomorphic involution $\sigma$ such that  $(X,\sO_X)=(\tilde X^\sigma, \sO_{\tilde X}^\sigma)$, where $\sO_{\tilde X}^\sigma=\{s\in \sO_{\tilde X}; s=\ol{s\circ\sigma}\}$.
%%%%%%
%\begin{remark} Other definitions of real analytic spaces are possible and often used. With the above definition, the structure sheaf of a  real or complex analytic space is coherent.\end{remark}
\begin{remark} To a $\oK$-analytic space one associates its {\em reduction} $(X,\sO_X^{\rm red})$, where the sheaf $\sO_X^{\rm red}$ is obtained by replacing the sheaf $\sO_{X\cap V}=\sO_U/(f_1\sO_U+\dots +f_n\sO_U)$ of a model space with sheaf $\sO_U/I_{X\cap V}$, where the ideal sheaf $I_{X\cap V}$ consists of all $\oK$-analytic functions vanishing on $X\cap V$. In the complex case, the ringed space $(X,\sO_X^{\rm red})$ is still an analytic space, but it may fail to be so in the real case (the ideal sheaf $I_{X\cap V}$ does not have to be locally finitely generated).
\end{remark}
%%%%
%%%%%
A complex analytic space $(X,\sO_X)$ has an underlying real analytic space $(X,\sO_{X}^\R)$ \cite[Def. II.4.1]{GMT}, the local models of which are given by the real and the imaginary parts  of the holomorphic functions defining the local models of $(X,\sO_X)$.
%A {\em complex structure} on a real analytic variety $(Y,\sO_Y)$ is an isomorphism $(Y,\sO_Y)\simeq (X,\sO_{X}^\R)$ with the real analytic space underlying a complex space $(X,\sO_X)$.
%If $(X,\sO_X)$ is reduced, then  $(X,\sO_{X_\R})$ admits an {\em almost complex structure}, i.e.\  an $\sO_{X}^\R$-linear automorphism $I$ of the cotangent sheaf with $I^2=-1$.
%A point $x$ of an analytic space  $(X,\sO_X)$ is called {\em irreducible} if the local ring $\sO_{X,x}$ does not have zero divisors. An analytic space is called {\em locally irreducible} if every of its points is irreducible.
%\par
A subset $Y$ of an analytic space  $(X,\sO_X)$ is called {\em analytic} if every point $y\in Y$ has a neighbourhood $V\subset X$ such that $V\cap Y$ is the zero set of finitely many functions in $\sO_X(V)$. If $I$ is the ideal generated by these function, then setting  $\sO_Y(V\cap Y)=\bigl(\sO_X(V)/I\bigr)\big|_{Y}$ makes $(Y,\sO_Y)$ into an analytic space, called a {\em subspace of $X$}.
%Let $(X,\sO_X)$ be a (real or complex) analytic space and $\sJ\subset \sO_X$ a finite (i.e.\ locally finitely generated) ideal sheaf. The analytic space $Z={\rm Supp}(\sO_X/\sJ)$ with $\sO_Z=\bigl.(\sO_X/\sJ)\bigr|_Z$ is called a {\em closed subspace} of $X$.
\par
Let $X$ be a real analytic space with a complexification $\tilde X$.
The singular locus of $X$ is $\Sing(X)=\Sing(\tilde X)\cap X$. It does not depend on the choice of a complexification and it is a closed subspace of $X$. The complement of $\Sing(X)$ will be denoted by $\Reg(X)$.
%We define inductively $\Sing_l(X)=\Sing(\Sing_{l-1}(X))$, where $\Sing_0(X)=X$. 
 $\Sing(X)$ may contain smooth points, i.e.\ points which have a neighbourhood analytically diffeomorphic to an open subset of a Euclidean space.
\par
A (real or complex) analytic space  is said to be {\em irreducible} if it is not the union of two nonempty closed  subspaces. Any analytic space $X$ is the union of  countably many irreducible analytic subspaces, called the {\em irreducible components} of $X$. If $X$ is reduced and $\oK=\C$, then the irreducible components are closures of the connected components of $\Reg(X)$, but this is false if $\oK=\oR$.
\par
The (analytic) dimension $\dim_x X$ of a complex space $X$ at a point $x\in X$ is the smallest integer $d$ such that $d$ holomorphic functions have $x$ as an isolated zero. It is equal to the topological dimension. The (real) analytic dimension $\dim_x X$ of a real analytic space is equal to the (complex) analytic dimension $\dim_x\tilde X$ of any complexification of $X$ (and, in general, is not equal to the topological dimension). A $\oK$-analytic space is said to be {\em pure-dimensional} if $x\mapsto\dim_x X$ is a constant function.
\par
A  nowhere dense closed subspace $T$ of a $\oK$-analytic space $X$ is called {\em thin}. A finite\footnote{A continuous map is called {\em finite} if it is finite-to-one and closed.} surjective analytic map $\phi:X\to Y$ between $\oK$-analytic spaces is called an {\em analytic covering} (of $Y$) if there exists a thin subspace $T\subset Y$ such that $\phi^{-1}(T)$ is thin in $X$, and the induced map $X\setminus \phi^{-1}(T)\to Y\setminus T$ is a local isomorphism (i.e.\ a local real analytic diffeomorphism if $\oK=\R$, and a local biholomorphism if $\oK=\oC$). The set $B\subset \phi^{-1}(T)$, where $\phi$ is not a local isomorphism, is called the {\em branch locus} of $\phi$. We remark that, if $\oK=\C$ and $Y$ is reduced, then any surjective, open, and finite mapping $\phi:X\to Y$ is an analytic covering \cite[Thm. I.12.11]{VII}.
\par
An equivalence relation $R\subset X\times X$ on a reduced complex space $(X,\sO_X)$ is called {\em analytic} if $R$ is a closed complex subspace of $ X\times X$.   We obtain a ringed space $(X/R,\sO_X^R)$, where $X/R$ is set of equivalence classes equipped with the quotient topology, and $\sO_X^R$ is the sheaf of holomorphic functions on $X$ which are  constant on fibres of $X\to X/R$.  In general,  $(X/R,\sO_X^R)$ is not a complex space (cf.\ \cite[Ch. IV]{VII}).

\subsection{Hypercomplex manifolds\label{hc-mflds}}

A (real) manifold $M$ is hypercomplex if its tangent bundle $TM$ admits a fibrewise action of quaternions such that the almost complex structure corresponding to any unit imaginary quaternion  is integrable. An important property of a hypercomplex structure is the existence of a unique torsion-free connection, called the {\em Obata connection}, for which all of these complex structures are parallel. To a hypercomplex manifold we can associate its {\em twistor space}, i.e.\ the manifold $M\times S^2$ with an almost complex structure $J_q\oplus J_{\oP^1}$ at the points of $M\times \{q\}$, where $J_q$ denotes the complex structure on $M$ given by the unit imaginary quaternion $q\in S^2$, and $J_{\oP^1}$ is the standard complex structure of $S^2\simeq \oP^1$. It has been shown by Simon Salamon \cite{Sal} that this complex structure is integrable and the projection $\pi:M\times S^2\to \oP^1$ onto the second factor is holomorphic. The antipodal map on $S^2$ induces an antiholomorphic involution $\sigma$ on $Z=M\times S^2$, and points of $M$ yield $\sigma$-equivariant sections of $\pi$ with normal bundle isomorphic to $\sO_{\oP^1}(1)^{\oplus 2n}$, where $\dim_\R M=4n$.
\par
Conversely, given a complex manifold $Z$  with a holomorphic submersion $\pi:Z\to \oP^1$ and an antiholomorphic involution $\sigma$ covering the antipodal map, the family of $\sigma$-equivariant sections of $\pi$ with normal bundle $\sO_{\oP^1}(1)^{\oplus 2n}$ (where $\dim_\C Z=2n+1$) is naturally a (possibly disconnected) hypercomplex manifold.
The existence of the manifold structure on the space of sections of the twistor space is the consequence of a much more general theory. Douady \cite{Dou} has shown that if $Z$ is a complex space, then the family of all compact complex subspaces of $Z$ is a complex space $\sD(Z)$. Moreover, the Zariski tangent space at each $C\in \sD(Z)$ is naturally isomorphic to $H^0(C,\sN_{C/Z})$, where $\sN_{C/Z}$ is the normal sheaf of $C$ in $Z$. Furthermore, if $C$ is a local complete intersection in $Z$ and $H^1(C,\sN_{C/Z})=0$, then $\sD(Z)$ is smooth at $C$. In the special case of compact complex submanifolds of a complex manifold, these results have been proved earlier by Kodaira \cite{Kod}.
\par
In the case of $Z$ equipped with a surjective holomorphic map $\pi:Z\to \oP^1$, we can consider the open and closed subspace $\Gamma(Z)$ of $\sD(Z)$ consisting of compact subspaces such that $\pi|_C:C\to \oP^1$ is an isomorphism, which is precisely the space of sections of $\pi$. In the case when $Z$ is the twistor space of a connected hypercomplex manifold $M$, as defined above, the space of sections corresponding to points of $M$ is not only a complete and maximal family as defined in \cite{Kod}, but even a connected component of the Douady space $\Gamma(Z)^\sigma$ of all $\sigma$-equivariant sections. Indeed, the above results of Kodaira imply that $M$ is open in $\Gamma(Z)^\sigma$. For a $\zeta\in \oP^1$, the morphism $\Phi_\zeta:\Gamma(Z)^\sigma\to Z_\zeta=\pi^{-1}(\zeta)$, given by the intersection of a section with fibre,  defines a retraction $r:  \Gamma(Z)^\sigma\to M$. Since the image of a retraction is closed, $M$ is connected (by assumption), open, and closed in $\Gamma(Z)^\sigma$.
\par
It has been shown in \cite{B-sigma} that one obtains a hypercomplex structure on more general subspaces of a Douady space. It is namely enough that $Z$ is a pure-dimensional complex space with a holomorphic surjection $\pi:Z\to \oP^1$ and  an antiholomorphic involution $\sigma$ covering the antipodal map. Then any connected component of the subset  of the Douady space $\sD(Z)^\sigma$ consisting of $\sigma$-invariant curves $C$ such that
\begin{enumerate}[label=(\roman*)]
\item $\pi:C\to \oP^1$ is flat;
\item $C$ is a local complete intersection in $Z$;
\item $h^0(C,\sN_{C/Z}(-2))=h^1(C,\sN_{C/Z}(-2))=0$,
\end{enumerate}
is a hypercomplex manifold.

\subsection{$\C$-hypercomplex manifolds\label{chc-mflds}}
Complexified quaternions have been named {\em biquaternions} by W.R.\ Hamilton. Their algebra is isomorphic to the algebra $\Mat_{2,2}(\cx)$ of $2\times 2$ complex matrices. We can consider complex manifolds, the tangent space of which admits an action of this algebra (with additional properties). The name {\em biquaternionic manifold} should be, however, properly reserved for the complex analogue of quaternionic manifolds, and therefore we shall call a complex version of hypercomplex simply $\C$-hypercomplex.
\begin{definition}  A complex manifold $M$ is called {\em almost  $\C$-hypercomplex} if its holomorphic tangent bundle $T M$ decomposes as $T M \simeq
E \otimes \C^2$, where $E$ is a holomorphic vector bundle. It is {\em $\C$-hypercomplex} if, in addition, for any $v \in \C^2$ the
subbundle $E\otimes v$ defines an integrable distribution on $M$.\label{E-bundle}\end{definition}
%%%%%
\begin{remark} Almost $\C$-hypercomplex is equivalent to having a linear action of $\Mat_{2,2}(\cx)$ on $TM$. If $M$ is (almost) $\C$-hypercomplex, then so is $M$ with the opposite complex structure.  A holomorphic or an antiholomorphic map between $\C$-hypercomplex manifolds is said to be {\em $\C$-hypercomplex} if its differential commutes with the respective actions of biquaternions.
\end{remark}
%%%%
$\C$-hypercomplex manifolds arise naturally via the twistor construction. If $\pi:Z^{2n+1}\to \oP^1$ is a surjective holomorphic map from a complex manifold, then the open subspace of the Douady space $\sD(Z)$ consisting of sections with normal bundle isomorphic to $\sO_{\oP^1}(1)^{\oplus 2n}$
 is a $\C$-hypercomplex manifold. In particular, if $Z$ is the twistor space of a hypercomplex manifold $X$, then we obtain this way a natural complexification of $X$ and its hypercomplex structure. Conversely, 
given a $\cx$-hypercomplex manifold $M$ we can define an integrable distribution $\sE$ on $M\times \oP^1$ by $\sE_{|M\times\{z\}}=E\otimes h$, where $h$ is the highest weight vector for the maximal torus in $SL(2,\cx)$ corresponding to $z\in \oP^1$. If the leaf space $Z$ of $\sE$ is  a manifold, then $Z$ is the twistor space of $M$. Otherwise, one needs to view $Z$ in terms of foliated geometry.
\par
%%%%
There is another way to view $\C$-hypercomplex  structures. Observe that, for any $v\neq w\in \oC^2$, the subbundles $E\otimes v$ and $E\otimes w$ of $TM$ are everywhere transversal. Therefore a $\C$-hypercomplex structure can be also viewed as a special family of local product structures  parametrised by $(\zeta,\eta)\in (\oP^1\times \oP^1)\setminus\Delta$. If the twistor space of $M$ is a manifold, then these structures are global ``up to a covering", in the sense that there exist complex manifolds $ M_\zeta$, $\zeta\in \oP^1$, and, for any $(\zeta,\eta)\in (\oP^1\times \oP^1)\setminus\Delta$,  a surjective local biholomorphism  $M\to M_\zeta\times M_\eta$.

\section{Hypercomplex spaces}

We begin by making  additional observations about the smooth case.
\par
Let $M$ be a $2n$-dimensional real-analytic manifold with an integrable almost complex structure $I\in \End(TM)$. We can extend $I$ to a complexification (cf.\ \S\ref{ana-spaces})  $(\tilde M,\sigma)$, so that $\sigma^\ast I=-I$.  The $\pm i$-eigenbundles of $I$ define integrable distributions $T^\pm \tilde M$ and, consequently, two analytic equivalence relations (cf.\ \S\ref{ana-spaces})  $R_\pm$ on $\tilde M$: two points are related if and only if they belong to the same leaf of the distribution. The relations $R_\pm\subset \tilde M\times \tilde M$ satisfy the following three conditions: (1) $\sigma(R_-)=R_+$; (2) the equivalence classes of $R_\pm $ are complex $n$-dimensional submanifolds of $\tilde M$; and (3)  equivalence classes of $R_+$ are transversal to  equivalence classes of $R_-$ at any intersection point. Property (i) implies that $R_+$ is determined by $R_-$ and $\sigma$.
\par
Similarly,  if $M$ is a real analytic $4n$-dimensional hypercomplex manifold, then, as explained in \S\ref{chc-mflds}, its complexification $\tilde M$ has a natural $\cx$-hypercomplex structure, which may be viewed as an analytic $\oP^1$-family\footnote{A family $\{R_y;y\in Y\}$ of equivalence relations on a complex space $X$, parametrised by a complex space $Y$, is called an {\em analytic $Y$-family}, if the equivalence relation $R$ on $X\times Y$, equal to $R_y$ on $X\times\{y\}$, is analytic.} $R_\zeta$, $\zeta\in\oP^1$, of equivalence relations. %defined by the complex structures corresponding to unit imaginary quaternions. 
\par
We now generalise this interpretation of hypercomplex structures to singular setting: 
%%%%
\begin{definition} A real analytic space $X$ of pure dimension $4n$, such that $\Sing(X)$ is thin, is said to be  {\em weakly hypercomplex } if there exists a  reduced complexification $(\tilde X,\sigma)$ of $X$ and
an analytic $\oP^1$-family $\{R_\zeta;\zeta\in \oP^1\}$ of equivalence relations on $\tilde X$, with the following properties:
\begin{enumerate}[label=(\roman*)] 
\item the restrictions $R^\prime_\zeta$ of $R_\zeta$, $\zeta\in \oP^1$, to $\Reg(X)$ are the equivalence relations induced by a hypercomplex structure on $\Reg(X)$, and $R_\zeta=\ol{R^\prime_\zeta}$, $\forall\zeta\in \oP^1$;
\item every  equivalence class  of $R_\zeta$, $\zeta\in \oP^1$, intersects $X$ and its dimension at such an intersection point is $2n$;
\item $R_\zeta\cap \bigl(\Reg(X)\times \Sing(X)\bigr)=\emptyset$, $\forall\zeta\in \oP^1$.
 \item $\bigcap_{\zeta\in \oP^1} R_\zeta$ is the diagonal in $\tilde X\times \tilde X$.
\end{enumerate}
$X$ is said to be {\em hypercomplex}, if, in addition:
\begin{itemize}
\item[(v)] for every $\zeta\neq \eta$,
 the equivalence classes of $R_\zeta\cap R_\eta$ are discrete.
 \end{itemize}\label{hc-space}
\end{definition}
%%%%
%%%
\begin{remark} {\em Weakly hypercomplex} and {\em  hypercomplex schemes} are defined analogously  by replacing analytic spaces with schemes of finite type over $\R$ or $\C$, ``discrete" with ``quasifinite", and analytic equivalence relations  with algebraic ones. \label{hc-scheme} \end{remark}
%%%%
%%%%
Here is a couple of immediate consequences of the definition:
\begin{proposition} Let $X$ be a weakly hypercomplex space with $\tilde X$, and $R_\zeta$, $\zeta\in \oP^1$, be as in Definition \ref{hc-space}. Then $\sigma({R_\zeta})=R_{-1/\bar\zeta}$, $\forall\zeta\in \oP^1$. If $X$ is hypercomplex, then the intersection $L\cap X$  of any equivalence class $L$ of any $R_\zeta$ with $X$ is discrete.
\label{consequences}
\end{proposition}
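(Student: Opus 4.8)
The plan is to derive both assertions from the corresponding facts on $\Reg(X)$, supplied by property~(i), together with the nondegeneracy condition~(v); the second assertion will then follow formally from the first. Throughout I read the stated $\sigma(R_\zeta)$ as the image $(\sigma\times\sigma)(R_\zeta)\subset \tilde X\times\tilde X$.

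For the identity $\sigma(R_\zeta)=R_{-1/\bar\zeta}$ I would first prove it on a complexification of $\Reg(X)$ and then pass to $\tilde X$ by taking closures. On the $\sigma$-invariant open subset of $\tilde X$ complexifying $\Reg(X)$, property~(i) identifies $R'_\zeta$ with the $\C$-hypercomplex equivalence relation of the hypercomplex structure on $\Reg(X)$, i.e.\ the relation whose classes are the leaves of one of the two eigendistributions of the extended complex structure $I_\zeta$. As recorded in the smooth discussion preceding Definition~\ref{hc-space}, the involution $\sigma$ satisfies $\sigma^\ast I_\zeta=-I_\zeta$ and therefore interchanges the two eigendistributions of $I_\zeta$ (this is the relation $\sigma(R_-)=R_+$). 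Under the standard identification $S^2\simeq\oP^1$ the antipodal map is $\zeta\mapsto -1/\bar\zeta$ and $I_{-1/\bar\zeta}=-I_\zeta$, so the eigendistribution of $I_\zeta$ swapped in by $\sigma$ is exactly the one defining $R'_{-1/\bar\zeta}$; hence $\sigma(R'_\zeta)=R'_{-1/\bar\zeta}$. Since $\sigma\times\sigma$ is a homeomorphism of $\tilde X\times\tilde X$ it commutes with closure, and using $R_\zeta=\ol{R'_\zeta}$ from~(i) I conclude $\sigma(R_\zeta)=\ol{\sigma(R'_\zeta)}=\ol{R'_{-1/\bar\zeta}}=R_{-1/\bar\zeta}$.

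For the second assertion, fix $\zeta$ and an equivalence class $L$ of $R_\zeta$, and take any $p,q\in L\cap X$. Then $(p,q)\in R_\zeta$, while $p,q\in X=\tilde X^\sigma$ gives $\sigma(p)=p$ and $\sigma(q)=q$; applying $\sigma\times\sigma$ and the first assertion yields $(p,q)=(\sigma(p),\sigma(q))\in\sigma(R_\zeta)=R_{-1/\bar\zeta}$. Thus $(p,q)\in R_\zeta\cap R_{-1/\bar\zeta}$, so all of $L\cap X$ lies in a single class of the equivalence relation $R_\zeta\cap R_{-1/\bar\zeta}$. Since no point of $\oP^1$ equals its antipode, $\zeta\neq -1/\bar\zeta$, and condition~(v) (with $\eta=-1/\bar\zeta$) shows this class is discrete; hence $L\cap X$ is discrete.

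The only genuinely delicate point is the smooth input for the first assertion, namely the compatibility of $\sigma$ with the $\oP^1$-parametrisation of $\{R'_\zeta\}$ so that $\sigma$ covers the antipodal map $\zeta\mapsto-1/\bar\zeta$; once this is in hand, the first assertion is a closure argument and the second is an elementary manipulation of equivalence relations together with~(v). One should also verify the harmless bookkeeping that $R_\zeta\cap R_{-1/\bar\zeta}$ is again an equivalence relation (an intersection of two such), so that condition~(v) indeed applies.
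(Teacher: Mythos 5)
Your proof is correct and follows essentially the same route as the paper: the identity $\sigma(R_\zeta)=R_{-1/\bar\zeta}$ is deduced from condition (i) (you merely spell out the closure argument the paper leaves implicit), and discreteness of $L\cap X$ then follows by placing $L\cap X$ inside a single class of $R_\zeta\cap R_{-1/\bar\zeta}$ and invoking condition (v) with $\eta=-1/\bar\zeta$, exactly as the paper does.
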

\begin{proof} The first statement  follows from (i).  If $L$ is an equivalence class of any $R_\zeta$, then $\sigma({R_\zeta})=R_{-1/\bar\zeta}$ forces $L\cap X$ to be equal to $L^\prime \cap X$, where $L^\prime$ is an equivalence class of $R_{-1/\bar\zeta}$. Now (v) implies that $L\cap X$ is discrete.
\end{proof}
%%%%
%%%%
The {\em hypercomplex structure} of a hypercomplex space (resp.\ the weak hypercomplex structure) is, properly seen, the germ of complexifications satisfying the conditions in Definition \ref{hc-space}. Equivalently, we can add the condition:
\begin{itemize}
\item[(vi)] for every $\zeta\in\oP^1$, if $x,y\in X$ are $R_\zeta$-equivalent, then they are equivalent with respect to $R_\zeta\cap (V\times V)$ for  every $\sigma$-invariant open neighbourhood $V\subset \tilde X$ of $X$. 
\end{itemize}
We observe that condition (i) implies that the hypercomplex structure of a hypercomplex space is uniquely determined by the hypercomplex structure of $\Reg(X)$.
%%%%
\par
Let $\tilde X$ be an arbitrary complexification of $X$ satisfying conditions (i)--(iv) of Definition \ref{hc-space}, and 
consider the analytic relation $R$  on $\tilde X\times\oP^1$, equal to $R_\zeta$ on $\tilde X\times \{\zeta\}$.
If the ringed space $Z(\tilde X)=\bigl((\tilde X\times \oP^1)/R,(\sO_{\tilde X\times \oP^1)})^R)$ is a reduced complex space, then it is called a {\em twistor space for $X$}. It is clear that $Z(\tilde X)$ comes equipped with a surjective holomorphic map $\pi:Z(\tilde X)\to \oP^1$ and an (analytic) antiholomorphic involution $\sigma:Z(\tilde X)\to Z(\tilde X)$ covering the antipodal map. If $V\subset \tilde X$ is a $\sigma$-invariant open neighbourhood  of $X$, then $Z(V)$ is also a twistor space with a canonical surjective holomorphic map $Z(V)\to Z(\tilde X)$. If $\tilde X$ satisfies condition (vi), then this map is the identity, and we shall call $Z(\tilde X)$ for such $\tilde X$ the {\em finest twistor space} for $X$. 
\par
(Weakly) hypercomplex spaces with normal complexifications have twistor spaces:
\begin{proposition} Let $X$ be a weakly hypercomplex space such that the complexification $\tilde X$ satisfying the conditions in Definition \ref{hc-space} is normal.  Then, after making $\tilde X$ smaller, if necessary,  the ringed space  $Z(\tilde X)=\bigl((\tilde X\times \oP^1)/R,(\sO_{\tilde X\times \oP^1)})^R)$ is a complex space. Moreover, $Z(\tilde X)$ and the fibres of $\pi:Z(\tilde X)\to \oP^1$ are normal.\label{Z-exist}
\end{proposition}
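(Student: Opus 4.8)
The plan is to construct the complex structure on $Z(\tilde X)$ locally, realising it as the leaf space of the integrable holomorphic foliation underlying $R$, and to use the normality of $\tilde X$ to carry leaf-wise constant holomorphic functions across the singular locus. First I would record the two consequences of normality that drive the argument: $\tilde X$ is reduced, and $\Sing(\tilde X)$ has complex codimension $\geq 2$; hence the same holds for the normal space $\tilde X\times\oP^1$, whose singular locus is $\Sing(\tilde X)\times\oP^1$. On the regular locus, condition (i) says that $R'_\zeta$ is cut out on $\Reg(\tilde X)\times\oP^1$ by the integrable distribution $\sE$ of the $\C$-hypercomplex structure of the complexification (cf.\ \S\ref{chc-mflds}), whose leaves are the $2n$-dimensional equivalence classes inside the slices $\Reg(\tilde X)\times\{\zeta\}$. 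Shrinking $\tilde X$ to a sufficiently small $\sigma$-invariant neighbourhood of $X$ — this is the ``making $\tilde X$ smaller'' of the statement — I may assume, by the holomorphic Frobenius theorem, that this foliation is simple, so that its leaf space $Z^\circ$ is a $(2n+1)$-dimensional complex manifold with a holomorphic submersion onto $\oP^1$; the same shrinking makes the leaf space Hausdorff, which is what will make $Z(\tilde X)$ separated.

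Next comes the extension step. Fix $z_0\in Z(\tilde X)$, represented by an $R_{\zeta_0}$-class $L_0$, and a point of $L_0$; near a regular point choose, from the Frobenius charts, holomorphic functions $f_1,\dots,f_{2n}$ on $(U\times\oP^1)\cap(\Reg(\tilde X)\times\oP^1)$ (for a small $U\subset\tilde X$) that are constant on the $R_\zeta$-leaves, depend holomorphically on $\zeta$, and restrict to transversal coordinates. Being holomorphic off $\Sing(\tilde X)\times\oP^1$, which has codimension $\geq 2$ in the normal space $\tilde X\times\oP^1$, each $f_j$ extends holomorphically across it by Riemann's second extension theorem. Because $R_\zeta=\ol{R'_\zeta}$, the extensions stay $R$-invariant by continuity. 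This produces a holomorphic map $F=(f_1,\dots,f_{2n},\zeta):U\times\oP^1\to\C^{2n}\times\oP^1$ that is constant precisely on $R$-classes over the regular part and whose restriction to a transversal is a biholomorphism onto an open piece of $Z^\circ$.

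I would then identify a neighbourhood of $z_0$ in $Z(\tilde X)$ with the image of $F$, equipped with the sheaf $q_{*}\bigl(\sO_{\tilde X\times\oP^1}\bigr)^{R}$, and verify it is a $\oK$-model space. The regular part is already the manifold $Z^\circ$ glued in through $F$; the real content lies at the leaves meeting $\Sing(\tilde X)$, and this is where I expect the main obstacle. One must show that $F$ does not collapse distinct $R$-classes and that its image is a genuine analytic subset carrying the quotient sheaf, so that the image of $\Sing(\tilde X)$ is a thin analytic subset of $Z(\tilde X)$ rather than an obstruction to the local-model property. The framework is the structure theory of analytic equivalence relations and their quotients (cf.\ \cite[Ch.\ IV]{VII}): openness of the quotient map, inherited from the local product structure of the simple foliation on $\Reg(\tilde X)$, together with the extended fibre-constant functions furnished above, should force $q_{*}\bigl(\sO_{\tilde X\times\oP^1}\bigr)^{R}$ to be a coherent sheaf of local rings with the requisite separation property. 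Normality of $\tilde X$ — equivalently $\codim\Sing(\tilde X)\geq 2$ — is exactly what makes enough such functions available to separate classes across the singular locus, and is therefore indispensable here.

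Finally I would deduce normality of $Z(\tilde X)$ and of the fibres. Granting that $q:\tilde X\times\oP^1\to Z(\tilde X)$ and the fibrewise maps $q_\zeta:\tilde X\to Z_\zeta:=\pi^{-1}(\zeta)$ are open, surjective and holomorphic, normality transfers from $\tilde X$: both spaces are reduced, and a bounded holomorphic function off a thin subset $A$ pulls back along the open map to a bounded holomorphic function off the thin analytic set $q^{-1}(A)$ (thin because $q$ is open), extends by normality of $\tilde X$, and descends again since the extension remains $R$-invariant by continuity. This is Riemann's extension property on $Z(\tilde X)$ and on each $Z_\zeta$, giving their normality.
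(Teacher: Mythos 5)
Your strategy (Frobenius charts on $\Reg(\tilde X)$, extension of leaf-constant functions across the codimension-two singular set, then descent of normality) is genuinely different from the paper's, but it has a gap at exactly the decisive point, and you flag it yourself. The paper never constructs local models. Instead it verifies that $R$ is a \emph{nowhere degenerate} equivalence relation in the sense of \cite[Def.\ IV.6.2]{VII}: a dimension count using conditions (i) and (ii) of Definition \ref{hc-space} and upper semicontinuity of fibre dimension shows that $R$ is pure-dimensional of dimension $4n+1$ and that \emph{all} equivalence classes --- including those through points of $\Sing(\tilde X)$ --- are pure of dimension $2n$; then Remmert's open mapping theorem \cite[Thm.\ II.1.18]{VII}, which needs the local irreducibility of $\tilde X$ supplied by normality, gives openness of the two projections $p_1,p_2:R\to\tilde X\times\oP^1$. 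Since $\tilde X$ is normal, $R$ is then a normal equivalence relation, and the quotient theorem \cite[Thm.\ IV.7.1]{VII} yields that $Z(\tilde X)$ is a normal complex space; the same applied to each $R_\zeta$ gives normality of the fibres. Your proposal replaces this citation by the assertion that the extended invariant functions ``should force'' the quotient sheaf to be a local-model structure sheaf. That assertion --- distinct classes through singular points are separated by invariant holomorphic functions, the image of your map $F$ is an analytic set, and the quotient sheaf is its structure sheaf --- is precisely the content of the quotient theorem, and nothing in your argument proves it. It is the entire difficulty of the proposition.

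Two subsidiary steps are also unjustified. First, openness of the quotient map at classes meeting $\Sing(\tilde X)$ cannot be ``inherited from the local product structure of the simple foliation on $\Reg(\tilde X)$'': no product structure is available at those points, which is exactly why the paper needs pure-dimensionality of the classes plus Remmert's theorem. Second, the claim that shrinking $\tilde X$ makes the regular-locus foliation simple with Hausdorff leaf space is not established (Frobenius controls nothing about leaves through regular points arbitrarily close to $\Sing(\tilde X)$, and any neighbourhood of $X$ contains such points); relatedly, your functions $f_j$ must be defined on an entire punctured neighbourhood $\bigl(U\setminus\Sing(\tilde X)\bigr)\times\oP^1$ before the second Riemann extension theorem applies, whereas Frobenius charts provide them only locally, and pulling back coordinates from the leaf space fails when the image of the punctured neighbourhood does not lie in a single chart. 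Your closing argument for normality is fine in spirit, but it presupposes that $Z(\tilde X)$ is already a complex space with open quotient map --- the very things at issue.
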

\begin{proof}  Condition (i) implies that the  $R_\zeta$-equivalence classes, $\zeta\in \oP^1$,
of points in $\Reg(\tilde X)$ are of pure dimension $2n$ (possibly after making $\tilde X$ smaller). In particular, the dimension of $R$ at points in $\bigl(\Reg(\tilde X)\times\oP^1)\times \bigl(\Reg(\tilde X)\times \oP^1)$ is $4n+1$. Condition (ii) and the upper semicontinuity of dimension imply that the dimension $\dim_y L$ of any equivalence class $L$ of $R$ is $\leq 2n$ for points $y$ near $X$. Hence $\dim_{(x,y)}R\leq 4n+1$ at any $(x,y)\in R$. Now the second part of (i) and the upper semicontinuity of dimension implies that $R$ is pure-dimensional of dimension $4n+1$. Similarly, (i) and the upper semicontinuity of the dimension of fibres of the projections $p_1,p_2:R\to \tilde X\times \oP^1$ implies that the equivalence classes are pure-dimensional of dimension $2n$. Since $\tilde X$ is locally irreducible, it follows from Remmert's theorem
 \cite[Thm.\ II.1.18]{VII} that the two projections $p_1,p_2:R\to \tilde X\times \oP^1$ are open. The openness of $p_1,p_2$, the pure-dimensionality of the fibres, and the local irreducibility of $\tilde X$ mean that $R$ is a {\em nowhere degenerate} equivalence relation \cite[Def.\ IV.6.2]{VII}. Since $\tilde X$ is normal, $R$ is a {\em normal} equivalence relation (\cite[Def.\ IV.6.3]{VII} and the remarks thereafter). It follows from \cite[Thm.\ IV.7.1.]{VII}  that $Z(\tilde X)$ is a complex space, and from the remarks in the second paragraph on p.\ 203 in \cite{VII} that $Z(\tilde X)$ is normal. Finally, the normality of the fibres of $\pi$ follows from the same arguments applied to every $R_\zeta$.
\end{proof}
%%%%%
\begin{corollary} Let $X$ be a hypercomplex space which admits a normal complexification $\tilde X$ satisfying the conditions in Definition \ref{hc-space}. Then $\codim_\R\Sing(X)\geq 4$. 
\end{corollary}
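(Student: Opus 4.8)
The plan is to fix a parameter and push $\Sing(X)$ into the singular locus of a twistor fibre, where normality forces high codimension. By Proposition \ref{Z-exist} each fibre $Z_\zeta=\pi^{-1}(\zeta)$ is a normal complex space; since the $R_\zeta$-equivalence classes have pure dimension $2n$, the quotient map $q_\zeta\colon\tilde X\to Z_\zeta$ (whose fibres are the $R_\zeta$-classes) has $\dim_\C Z_\zeta=2n$, and normality gives $\dim_\C\Sing(Z_\zeta)\le 2n-2$. By Proposition \ref{consequences} the set $L\cap X$ is discrete for every $R_\zeta$-class $L$; equivalently, the restriction $q_\zeta|_X\colon X\to Z_\zeta$ has discrete fibres. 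Since a map with discrete fibres does not raise dimension, it suffices to send $\Sing(X)$ into sets of the form $\Sing(Z_\zeta)$: for any $A\subseteq\Sing(X)$ with $q_\zeta(A)\subseteq\Sing(Z_\zeta)$ we obtain $\dim_\R A\le\dim_\R\Sing(Z_\zeta)=2\dim_\C\Sing(Z_\zeta)\le 4n-4$.

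The geometric core is the following claim: for every $x\in\Sing(\tilde X)$ there is at most one $\zeta\in\oP^1$ that is \emph{exceptional} for $x$, in the sense that $q_\zeta(x)\in\Reg(Z_\zeta)$. Granting this, choose any two parameters $\zeta_1\ne\zeta_2$ and set $A_i=\{x\in\Sing(X):q_{\zeta_i}(x)\in\Sing(Z_{\zeta_i})\}$. Since each $x$ is non-exceptional for at least one of $\zeta_1,\zeta_2$, the claim gives $\Sing(X)=A_1\cup A_2$, and the reduction above yields $\dim_\R A_i\le 4n-4$ for $i=1,2$. Hence $\dim_\R\Sing(X)\le 4n-4$, i.e.\ $\codim_\R\Sing(X)\ge 4$.

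To prove the claim I would argue by contradiction. Suppose $x\in\Sing(\tilde X)$ with $q_{\zeta_1}(x)\in\Reg(Z_{\zeta_1})$ and $q_{\zeta_2}(x)\in\Reg(Z_{\zeta_2})$ for some $\zeta_1\ne\zeta_2$, and consider $F=(q_{\zeta_1},q_{\zeta_2})\colon\tilde X\to Z_{\zeta_1}\times Z_{\zeta_2}$. By condition (v) of Definition \ref{hc-space} the fibres of $F$, being the classes of $R_{\zeta_1}\cap R_{\zeta_2}$, are discrete, so $x$ is isolated in its fibre and $F$ is finite near $x$. At regular points of $\tilde X$ the two foliations by $R_{\zeta_1}$- and $R_{\zeta_2}$-classes are transversal (the complexification of the defining transversality of the $\C$-hypercomplex structure on $\Reg(X)$, cf.\ \S\ref{chc-mflds} and condition (i)), so there $dF$ is an isomorphism and $F$ is a local biholomorphism; thus the branch locus of $F$ is contained in $\Sing(\tilde X)$. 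As the target $Z_{\zeta_1}\times Z_{\zeta_2}$ is smooth near $F(x)$, purity of the branch locus (Zariski--Nagata) applies locally: the branch locus of the finite map $F$ from the normal space $\tilde X$ to a smooth space is empty or of pure codimension $1$ near $x$. But $\tilde X$ is normal, so $\Sing(\tilde X)$ has codimension $\ge 2$; hence the branch locus is empty near $x$, $F$ is \'etale at $x$ onto a smooth target, and $\tilde X$ is smooth at $x$ --- contradicting $x\in\Sing(\tilde X)$.

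The main obstacle is the step asserting that the branch locus of $F$ lies in $\Sing(\tilde X)$, i.e.\ that $F$ is a local biholomorphism at \emph{every} regular point of $\tilde X$ near $x$, and not merely near $\Reg(X)$. Transversality of the two foliations is built into the $\C$-hypercomplex structure only on a neighbourhood of $\Reg(X)$, and one must ensure it persists throughout $\Reg(\tilde X)$ up to $\Sing(\tilde X)$. I expect to handle this by passing to a sufficiently small germ of complexification $\tilde X$, so that $\Reg(\tilde X)$ carries the complexified $\C$-hypercomplex structure, together with the $\sigma$-invariance of the branch locus: being $\sigma$-invariant and disjoint from $\Reg(X)$, it meets $X$ only in $\Sing(X)$, which should localise its behaviour near $x$. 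The correct formulation and application of purity of the branch locus in the normal-analytic (rather than algebraic) category is the other technical point requiring care.
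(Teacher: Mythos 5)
Your opening reduction and the two-parameter union trick are correct, and they are essentially the paper's own argument: the paper's proof consists of citing Proposition \ref{consequences} (the maps $\Phi_\zeta\colon X\to Z_\zeta^{\R}$ have discrete fibres) together with normality of the fibres $Z_\zeta$ (Proposition \ref{Z-exist}), so that $\codim_\R\Sing(Z_\zeta^\R)\geq 4$, and then concluding at once --- i.e.\ it treats as immediate precisely the statement you isolate as the ``geometric core'', that singular points of $X$ cannot map to regular points of the fibre. So you have correctly located the crux; the question is whether your purity argument proves it.

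It does not, and the gap is the one you yourself flag; it is genuine, and the remedies you sketch do not close it. The step ``the branch locus of $F=(q_{\zeta_1},q_{\zeta_2})$ lies in $\Sing(\tilde X)$'' needs the two families of equivalence classes to be \emph{smooth and transversal at every point of} $\Reg(\tilde X)$ near $x$. But Definition \ref{hc-space} provides a $\C$-hypercomplex (hence transversal) structure only on some neighbourhood $U'$ of $\Reg(X)$ (condition (i)); at points of $\Reg(\tilde X)\setminus U'$ the remaining conditions (ii)--(v) are purely set-theoretic: pure dimension $2n$ of the classes and discreteness of $L_1\cap L_2$ do not prevent a class from being singular at such a point, nor two smooth classes from being mutually tangent there (compare $z\mapsto z^2$: discrete fibres, degenerate differential). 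Such points can occur arbitrarily close to $\Sing(X)$, and no shrinking of $\tilde X$ helps: replacing $\tilde X$ by a smaller $\sigma$-invariant neighbourhood of $X$ still leaves regular points of $\tilde X$ near $\Sing(X)$ that lie outside any prescribed $U'$, because the definition gives no control on how $U'$ thins out along $\Sing(X)$. Consequently the ramification locus of $F$ may a priori contain a hypersurface of $\Reg(\tilde X)$ accumulating at $x$; in that case the branch locus \emph{is} of pure codimension one, purity is satisfied, and no contradiction results --- the purity theorem simply cannot distinguish this scenario from the one you want to exclude. The $\sigma$-invariance idea does not repair this: taking $\zeta_2=-1/\bar\zeta_1$ does make the ramification locus $\sigma$-invariant, but a $\sigma$-invariant complex hypersurface can have real trace of any dimension (even a single point), so the constraint that it meets $X$ only in $\Sing(X)$ bounds nothing about its \emph{complex} dimension, which is what the purity argument requires. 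Closing the gap seems to need input beyond conditions (i)--(v) --- for instance, deformation theory of the corresponding sections of $Z$ (flatness of $\pi$ gives $\Reg(Z_\zeta)\subseteq\Reg(Z)$, and one can try to force the normal bundle of a section through $\Reg(Z)$ to be $\sO_{\oP^1}(1)^{\oplus 2n}$, making $\Gamma(Z)$ and hence $\tilde X$ smooth there) --- none of which appears in the proposal. As it stands, your proof establishes the corollary only under the additional, unproved hypothesis that $F$ is unramified on $\Reg(\tilde X)$ near $\Sing(X)$.
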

\begin{proof} Let $Z=Z(\tilde X)$ be the corresponding twistor space. According to Proposition \ref{consequences}, the fibres of the map $X\to Z_\zeta$, where $Z_\zeta=\pi^{-1}(\zeta)$, are discrete for every $\zeta\in \oP^1$. Since $Z_\zeta$ is normal, $\codim_\R \Sing(Z_\zeta^\R)\geq 4$ and the claim follows. 
\end{proof}
%%%%
Let $Z=Z(\tilde X)$ be a twistor space for a hypercomplex space $X$.
We  have a natural real analytic map $\Phi:X\times S^2\to Z^\R$, which sends $(x,\zeta)$ to the $R_\zeta$-equivalence class of $x$. We observe:
\begin{proposition} The map $\Phi:X\times S^2\to Z^\R$ has the following properties: 
\begin{enumerate}[label=(\roman*)] 
\item it is surjective with discrete fibres and unbranched  away from $\Sing(X)\times S^2$;
\item it preserves the fibres and intertwines the antipodal map on $X\times S^2$ with $\sigma$ on $Z^\R$;
\item for any $x\in X$, the map $\phi_x:S^2\to Z^\R$, given by $t\mapsto \Phi(x,t)$ is holomorphic as a map $\oP^1\to Z$;
\item the map $x\mapsto \phi_x$ from $X$ to the Douady space $\Gamma(Z)^\sigma$ of $\sigma$-invariant holomorphic sections of $\pi$ is analytic and injective. \qed
\end{enumerate}
\label{Phi}
\end{proposition}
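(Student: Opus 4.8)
The plan is to realise $\Phi$ as the restriction to real points of the holomorphic quotient map defining $Z$, and then to deduce the substantive statement (iv) from the universal property of the Douady space. Write $q\colon \tilde X\times\oP^1\to Z$ for the canonical holomorphic map presenting $Z=Z(\tilde X)$ as the quotient by $R$; by construction $\pi\circ q$ is the projection to $\oP^1$, and $\Phi$ is exactly the restriction of $q$ to the real points $X\times S^2$, taking values in $Z^\R$. Granting this, (ii) is immediate: fibre-preservation is the identity $\pi\circ q=\mathrm{pr}_{\oP^1}$, while the intertwining of the antipodal map with $\sigma$ is the equivariance $\sigma\circ q=q\circ(\sigma\times(\zeta\mapsto -1/\bar\zeta))$, which is a direct translation of $\sigma(R_\zeta)=R_{-1/\bar\zeta}$ from Proposition \ref{consequences}. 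Statement (iii) is equally quick: $\phi_x$ is the restriction of the holomorphic $q$ to $\{x\}\times\oP^1$, hence holomorphic as a map $\oP^1\to Z$, and it is a section since $\pi\circ\phi_x=\mathrm{id}$.

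For (i) I would treat the three assertions separately. Surjectivity onto $Z^\R$ is condition (ii) of Definition \ref{hc-space}: every $R_\zeta$-class meets $X$, so each $z\in Z_\zeta$ is $\Phi(x,\zeta)$ for some $x\in X$. Discreteness of fibres is Proposition \ref{consequences}: the fibre over $z\in Z_\zeta$ is $(L\cap X)\times\{\zeta\}$, where $L$ is the class $z$, and $L\cap X$ is discrete. The unbranchedness on $\Reg(X)\times S^2$ is the one point that genuinely uses smoothness: over $\Reg(X)$ the $R_\zeta$ come from an honest hypercomplex structure (condition (i)), so near $\Reg(X)$ the complexification $\Reg(\tilde X)$ carries the $\cx$-hypercomplex leaf foliations of \S\ref{chc-mflds}; since $\Reg(X)$ is totally real in $\Reg(\tilde X)$ and meets each complex $R_\zeta$-leaf discretely, the dimension count $\dim_\R\Reg(X)=\dim_\R Z_\zeta^\R=4n$ forces $\Phi(\cdot,\zeta)$ to be a local real-analytic isomorphism onto $Z_\zeta^\R$, and letting $\zeta$ vary makes $\Phi$ a local isomorphism on $\Reg(X)\times S^2$.

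The heart of the argument is (iv). Injectivity is formal: $\phi_x=\phi_y$ says $x,y$ are $R_\zeta$-equivalent for every $\zeta$, whence $x=y$ by condition (iv) of Definition \ref{hc-space}. For analyticity the plan is to exhibit $\{\phi_x\}_{x\in\tilde X}$ as a flat family. I would set $\iota\colon\tilde X\times\oP^1\to\tilde X\times Z$, $\iota(x,\zeta)=(x,q(x,\zeta))$, and $\rho=\mathrm{id}_{\tilde X}\times\pi\colon\tilde X\times Z\to\tilde X\times\oP^1$. Since $\rho\circ\iota=\mathrm{id}$, the image $\Sigma:=\iota(\tilde X\times\oP^1)$ is the coincidence set $\{\,b:\iota(\rho(b))=b\,\}$ of two holomorphic self-maps of $\tilde X\times Z$, hence a closed complex subspace, and $\iota$ is an isomorphism onto $\Sigma$ with holomorphic inverse $\rho|_\Sigma$. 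Consequently the projection $\Sigma\to\tilde X$ is identified with the trivial bundle $\tilde X\times\oP^1\to\tilde X$: it is proper and flat, with fibre over $x$ the section $\phi_x(\oP^1)$. By the universal property of the Douady space (\S\ref{hc-mflds}) this family is classified by a holomorphic map $\tilde X\to\Gamma(Z)$, $x\mapsto\phi_x$; the equivariance of $q$ used in (ii) makes it intertwine $\sigma$ with the induced antiholomorphic involution on $\Gamma(Z)$, so it restricts on $X=\tilde X^\sigma$ to a real-analytic map $X\to\Gamma(Z)^\sigma$.

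I expect the analyticity in (iv) to be the main obstacle: the content is precisely that the pointwise assignment $x\mapsto\phi_x$ is a \emph{morphism} of spaces, not merely a set map. The device that I expect to carry the argument is the holomorphic retraction $\rho$, which at once shows that $\Sigma$ is a genuine closed subspace and trivialises it over $\tilde X$, so that flatness comes for free; absent such a retraction one would be forced to establish flatness of $\Sigma\to\tilde X$ directly. Two subsidiary points need attention: that $\iota$ is proper (it is, since $\oP^1$ is compact and $\rho$ is a left inverse), and that the coincidence-set structure on $\Sigma$ agrees with the product structure under $\iota$, for which one uses that the complexification $\tilde X$ in Definition \ref{hc-space} is reduced.
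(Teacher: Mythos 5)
The paper offers no proof of Proposition \ref{Phi} at all: it is stated as an observation, with the QED symbol placed directly in the statement, the point being that all four properties are regarded as immediate from the construction of $Z(\tilde X)$ as the quotient $(\tilde X\times \oP^1)/R$. Your proof starts from exactly that observation --- $\Phi$ is the restriction of the quotient map $q$ to $X\times S^2$ --- so your treatment of (ii), (iii), and of surjectivity and discreteness of fibres in (i) (via condition (ii) of Definition \ref{hc-space} and Proposition \ref{consequences}) coincides with what the paper leaves implicit, and is correct. The genuine addition is your argument for analyticity in (iv): realizing the family of graphs of the $\phi_x$ as the coincidence set $\Sigma=\{\,b: \iota(\rho(b))=b\,\}$, using the retraction $\rho$ to identify $\Sigma\cong \tilde X\times\oP^1$ over $\tilde X$ (whence properness and flatness are free), and invoking the universal property of the Douady space. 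This is sound --- in fact the equalizer argument works by the universal property of the fibre product alone, so your appeal to reducedness of $\tilde X$ is unnecessary --- and it is also genuinely needed later: the paper's proof of Proposition \ref{finitePhi}, which begins by extending $\phi$ to a holomorphic map $\tilde\phi:V\to\Gamma(Z)$, tacitly relies on precisely this fact. One caveat: your unbranchedness argument in (i) (injectivity of $d\Phi_\zeta$ on $T_x\Reg(X)$ by total reality, then a dimension count) implicitly assumes that $Z_\zeta$ is a $2n$-dimensional complex manifold near $\Phi_\zeta(x)$ for $x\in\Reg(X)$, i.e.\ that near such points $Z_\zeta$ is identified with a local leaf space of the foliation on $\Reg(\tilde X)$; this is where the openness of the quotient map established in the proof of Proposition \ref{Z-exist} must enter. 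The paper glosses over this point as well, so it is not a defect of your argument relative to the paper's own standard, but it is the one step of your sketch that would need to be expanded in a self-contained write-up.
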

%%%%
In addition:
\begin{proposition}  Let $Z$ be a twistor space for a hypercomplex space $X$. The map $\Phi$ yields an isomorphism of $X$ with a real analytic subspace of $\Gamma(Z)^\sigma$.\label{finitePhi} \end{proposition}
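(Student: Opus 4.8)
The plan is to show that the analytic injection $\Psi\colon X\to\Gamma(Z)^\sigma$, $\Psi(x)=\phi_x$, furnished by Proposition \ref{Phi}(iv), is an isomorphism onto its image by proving that it is a (locally closed) immersion, i.e.\ that the comorphism $\Psi^\#_x\colon\sO_{\Gamma(Z)^\sigma,\Psi(x)}\to\sO_{X,x}$ is surjective at every $x\in X$, and that $\Psi$ is a homeomorphism onto $\Psi(X)$. Over $\Reg(X)$ this is the classical twistor correspondence recalled in \S\ref{hc-mflds}: there the sections $\phi_x$ have normal bundle $\sO_{\oP^1}(1)^{\oplus 2n}$ and $\Psi$ restricts to an open embedding onto an open subspace of $\Gamma(Z)^\sigma$. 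The real content is therefore the behaviour along the thin set $\Sing(X)$, together with the globalisation.

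The basic tool is the family of intersection-with-fibre morphisms $\Phi_\zeta\colon\Gamma(Z)^\sigma\to Z_\zeta^\R$ (with holomorphic avatar $\Gamma(Z)\to Z_\zeta$ on the complex Douady space), which send a section to the point where it meets $Z_\zeta=\pi^{-1}(\zeta)$. Since $Z_\zeta=\tilde X/R_\zeta$ and $\Phi_\zeta(\phi_x)=\phi_x(\zeta)$ is exactly the $R_\zeta$-class of $x$, we have $\Phi_\zeta\circ\Psi=q_\zeta|_X$, where $q_\zeta\colon\tilde X\to Z_\zeta$ is the quotient map. Consequently, for any holomorphic germ $g$ on $Z_\zeta$ the pullback $\Psi^\#(g\circ\Phi_\zeta)$ is the restriction to $X$ of an $R_\zeta$-invariant holomorphic germ on $\tilde X$; combining, via $\sigma(R_\zeta)=R_{-1/\bar\zeta}$ (Proposition \ref{consequences}), the contributions at $\zeta$ and $-1/\bar\zeta$ produces $\sigma$-invariant, i.e.\ real analytic, functions on $\Gamma(Z)^\sigma$ near $\Psi(x)$. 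Thus the image of $\Psi^\#_x$ contains the subalgebra of $\sO_{X,x}$ generated by all $R_\zeta$-invariant germs, $\zeta\in\oP^1$.

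The heart of the argument is to show that this subalgebra is all of $\sO_{X,x}$; equivalently, that the complexified map $\tilde\Psi\colon\tilde X\to\Gamma(Z)$, $x\mapsto s_x$, is an immersion. At a point of $\Reg(\tilde X)$ this is immediate from the $\C$-hypercomplex structure of \S\ref{chc-mflds}: one has $T_x\tilde X=E_x\otimes\C^2$ and $\Ker dq_\zeta|_x=E_x\otimes h_\zeta$, so for any two $\zeta\neq\eta$ the differential of $(q_\zeta,q_\eta)\colon\tilde X\to Z_\zeta\times Z_\eta$ is injective because $h_\zeta\cap h_\eta=0$ in $\C^2$, and hence so is $d\tilde\Psi_x$. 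At a point of $\Sing(X)$ the Zariski tangent space is larger and one must instead use the nondegeneracy condition (v): for $\zeta\neq\eta$ the relation $R_\zeta\cap R_\eta$ has discrete classes, so $(q_\zeta,q_\eta)$ is quasifinite and, by Remmert's theorem, finite onto an analytic subset of $Z_\zeta\times Z_\eta$ after shrinking $\tilde X$; feeding in the separation of points across all $\zeta$ coming from condition (iv), together with the normality of $\tilde X$ (hence of $Z$ and its fibres, by Proposition \ref{Z-exist}), one upgrades this to the surjectivity of $\tilde\Psi^\#$ onto $\sO_{\tilde X,x}$, and restriction to the $\sigma$-fixed locus gives surjectivity of $\Psi^\#_x$. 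This passage from the discreteness in (v) to surjectivity of the comorphism at singular points is the main obstacle, and it is where normality is genuinely used.

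It remains to globalise. By Proposition \ref{Phi}(i) the map $\Psi$ has discrete fibres and is injective, while a properness argument (using that the $R_\zeta$ are closed and that the sections are compact) shows that $\Psi$ is a homeomorphism onto the locally closed set $\Psi(X)$; in particular $\Psi(X)$ meets each of the local charts above precisely in the corresponding immersed subspace. Combined with the surjectivity of $\Psi^\#$ established above, this exhibits $\Psi(X)$ as a real analytic subspace of $\Gamma(Z)^\sigma$ and $\Psi$ as an isomorphism of $X$ onto it, as claimed.
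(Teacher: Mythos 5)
Your reduction of the statement to surjectivity of the comorphism $\Psi^\#$ plus a topological embedding is a fair reformulation, but your proof has a genuine gap at exactly the point you yourself call the main obstacle. At a singular point, all you actually establish is that for fixed $\zeta\neq\eta$ the map $(q_\zeta,q_\eta)$ is quasifinite, hence finite after shrinking; the passage from this to surjectivity of $\tilde\Psi^\#$ is pure assertion (``feeding in the separation of points \dots one upgrades this \dots''). It does not follow formally from the ingredients you list: a finite, injective holomorphic map with normal source need not have surjective comorphism --- the normalisation $t\mapsto(t^2,t^3)$ of the cusp is finite, bijective onto its image, has smooth (in particular normal) source, and yet the germ $t$ is not in the image of its comorphism. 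So conditions (iv) and (v) of Definition \ref{hc-space} together with normality do not by themselves ``upgrade'' to what you need; a concrete mechanism is missing. Two further problems: a single pair $(q_\zeta,q_\eta)$ is in general not injective --- in the example of Section \ref{H/pm1} the fibre evaluation $\Phi_0$ is two-to-one --- so the $R_\zeta$-invariant germs for finitely many $\zeta$ cannot generate $\sO_{X,x}$, and you offer no device for combining the contributions of infinitely many values of $\zeta$; moreover, normality of $\tilde X$ is not among the hypotheses of this proposition at all (it enters Proposition \ref{Z-exist}, which produces a twistor space; here a twistor space is simply assumed to exist), so an argument that ``genuinely uses'' normality proves a weaker statement.

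The paper's proof supplies precisely the tool you are missing, and applies it to the right map. One extends $\phi$ to an injective holomorphic map $\tilde\phi\colon V\to\Gamma(Z)$ on a $\sigma$-invariant neighbourhood $V$ of $X$ in $\tilde X$ (injectivity comes from the separation condition (iv), since $Z$ is a quotient of $\tilde X\times\oP^1$); upper semicontinuity of fibre dimension allows one to assume $\tilde\phi$ has discrete fibres, so every $x\in X$ has a neighbourhood $U$ on which $\tilde\phi$ is finite; the Finite Mapping Theorem \cite[Thm. I.8.2]{VII} then shows that $\tilde\phi(U)$ is a complex subspace of $\Gamma(Z)$, whence $\phi|_{U^\sigma}$ is an isomorphism of $U^\sigma$ with a real analytic subspace of $\Gamma(Z)^\sigma$, and the injectivity of $\phi$ globalises these local isomorphisms. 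The crucial difference from your plan is that the finiteness argument is run for the map into the Douady space itself, where injectivity is available, rather than for the fibrewise evaluations $(q_\zeta,q_\eta)$, where it is not; if you want to keep the structure of your write-up, replace the ``upgrade'' step by this local finiteness of $\tilde\Psi$ on the complexification together with the Finite Mapping Theorem.
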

\begin{proof} 
Let $\phi:X\to  \Gamma(Z)^\sigma$ be the injective  map defined by $\phi(x)(\zeta)=\Phi(x,\zeta)$. We can extend $\phi$ to an injective  holomorphic map $\tilde\phi:V\to \Gamma(Z)$, equivariant with respect to the antiholomorphic involutions, where $V$ is a neighbourhood of $X$ in $\tilde X$ with $V^\sigma=V$. Upper semicontinuity of the dimension of fibres of a holomorphic map implies that we may assume that $\tilde \phi$ has discrete fibres.
Any point $x\in X$ has a neighbourhood $U\subset V$ such that 
$\tilde\phi|_U$ is finite (i.e.\ proper). The  Finite Mapping Theorem \cite[Thm. I.8.2]{VII} implies that the image of $\tilde\phi|_U$ is a complex subspace of $ \Gamma(Z)$, and hence $\phi|_{U^\sigma}$ is an isomorphism of $U^\sigma$ with a real analytic subspace of $\Gamma(Z)^\sigma$. Since $\phi$ is injective, it is a global isomorphism between $X$ and a real analytic subspace of $\Gamma(Z)^\sigma$. 
\end{proof}
%%%%
\begin{remark} This proposition remains valid for weakly hypercomplex spaces.\end{remark}
%%%%%%
Conversely:
\begin{proposition} Let $Z$ be a reduced complex space, equipped with a surjective holomorphic map $\pi:Z\to \oP^1$ with locally irreducible fibres of dimension $2n$ and an analytic antiholomorphic involution $\sigma:Z\to Z$ covering the antipodal map.  Let $X$ be a pure-$4n$-dimensional real analytic subspace of $\Gamma(Z)^\sigma$ such that $\Reg(X)$ is open in $\Reg\bigl(\Gamma(Z)^\sigma\bigr)$. If,  for every $\zeta\in \oP^1$, the natural map $\Phi_\zeta:X\to Z_\zeta^\R$,  given by the intersection point of a section with  $Z_\zeta=\pi^{-1}(\zeta)$, is surjective, has discrete fibres, and is unbranched away from $\Sing(X)$, then $X$ is a hypercomplex space. Moreover, $Z$ is a twistor space for $X$.\label{Z-->X}
\end{proposition}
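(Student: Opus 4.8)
The plan is to realise $X$ as the $\sigma$-real part of a complex subspace of the Douady space $\Gamma(Z)$ and to read the equivalence relations off the evaluation maps, then verify conditions (i)--(v) of Definition \ref{hc-space}. First I would complexify: since $X$ is a real analytic subspace of the real form $\Gamma(Z)^\sigma$ of the complex space $\Gamma(Z)$, there is a reduced, $\sigma$-invariant complex subspace $\tilde X\subset\Gamma(Z)$ of pure dimension $4n$ with $\tilde X^\sigma=X$ after passing to a $\sigma$-invariant neighbourhood; this is the required complexification, and $\Sing(X)=\Sing(\tilde X)\cap X$ is automatically thin. The evaluation $\Phi_\zeta$ (intersection of a section with $Z_\zeta$) extends holomorphically to $\tilde X\to Z_\zeta$, and I would set $R_\zeta:=(\Phi_\zeta\times\Phi_\zeta)^{-1}(\Delta_{Z_\zeta})$, i.e.\ $C\sim_\zeta C'$ iff $C(\zeta)=C'(\zeta)$. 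Each $R_\zeta$ is a closed analytic equivalence relation, and the total relation $R$ on $\tilde X\times\oP^1$ cut out by $C(\zeta)=C'(\zeta)$ is analytic, so $\{R_\zeta\}$ is an analytic $\oP^1$-family. Condition (iv) is then immediate, since $C\sim_\zeta C'$ for all $\zeta$ means $C(\zeta)=C'(\zeta)$ for every $\zeta\in\oP^1$, hence $C=C'$.

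The heart of the argument is condition (i): that $\Reg(X)$ carries a hypercomplex structure inducing the $R'_\zeta=R_\zeta|_{\Reg(X)}$. I would obtain this from the twistor criterion of \S\ref{hc-mflds} by computing the normal bundle of a regular section. For $C\in\Reg(X)$, openness of $\Reg(X)$ in $\Reg(\Gamma(Z)^\sigma)$ makes $C$ a smooth point of $\Gamma(Z)$, so $T_C\tilde X=H^0(C,\sN_{C/Z})$ and $d_C\Phi_\zeta$ is the evaluation $\ev_\zeta\colon H^0(C,\sN_{C/Z})\to(\sN_{C/Z})_\zeta\cong T_{C(\zeta)}Z_\zeta$. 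Writing the rank-$2n$ bundle as $\sN_{C/Z}\cong\bigoplus_{i=1}^{2n}\sO_{\oP^1}(a_i)$, the hypothesis that $\Phi_\zeta$ is unbranched on $\Reg(X)$ forces $\ev_\zeta$ to restrict to a real isomorphism $T_CX\to(\sN_{C/Z})_\zeta$ for every $\zeta$. Using the antilinear involution on $H^0(C,\sN_{C/Z})$ induced by $\sigma$, any summand with $a_i\ge 2$ admits a section vanishing at $\zeta$ and $-1/\bar\zeta$, from which one builds a nonzero \emph{real} section vanishing at $\zeta$; injectivity of $\ev_\zeta|_{T_CX}$ therefore gives $a_i\le 1$ for all $i$, while the isomorphism gives $\dim_\C T_C\tilde X=h^0(C,\sN_{C/Z})=4n$. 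Since there are $2n$ summands, each of degree at most $1$, the equality $h^0=4n$ forces every $a_i=1$, so $\sN_{C/Z}\cong\sO_{\oP^1}(1)^{\oplus 2n}$ and $h^0(\sN_{C/Z}(-2))=h^1(\sN_{C/Z}(-2))=0$. With flatness of $\pi|_C$ automatic for a section and the local complete intersection property readily checked, the criterion from \cite{B-sigma} recalled in \S\ref{hc-mflds} shows the component of $\Gamma(Z)^\sigma$ through $C$ is hypercomplex, and one checks its induced relations are exactly the $R'_\zeta$; as $\Reg(X)$ is open there, this endows $\Reg(X)$ with a hypercomplex structure, and $R_\zeta=\overline{R'_\zeta}$ by pure-dimensionality and density of $\Reg(X)$.

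Condition (ii) follows from surjectivity of $\Phi_\zeta$ (every class $\Phi_\zeta^{-1}(p)$ meets $X$) together with the normal bundle computation, which shows $\ev_\zeta$ has $2n$-dimensional kernel, so the classes are pure of dimension $2n$. The two conditions governing $\Sing(X)$, namely (iii) (that $R_\zeta$ never relates a regular point to a singular one) and (v) (discreteness of the $R_\zeta\cap R_\eta$-classes), are where I expect the main difficulty, since both concern the complexified relations over $\Sing(X)$, where upper semicontinuity of fibre dimension permits jumps. For (v) I would reduce to showing that $(\Phi_\zeta,\Phi_\eta)\colon\tilde X\to Z_\zeta\times Z_\eta$ has discrete fibres; on $\Reg(\tilde X)$ this is immediate because $\sN_{C/Z}\cong\sO_{\oP^1}(1)^{\oplus 2n}$ makes $(\ev_\zeta,\ev_\eta)$ an isomorphism (a section of $\sO_{\oP^1}(1)$ is determined by its values at two distinct points). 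Propagating discreteness across the thin set $\Sing(X)$ is the crux: I would combine the hypothesis that each $\Phi_\zeta|_X$ has discrete fibres with the relation $\sigma(R_\zeta)=R_{-1/\bar\zeta}$ of Proposition \ref{consequences} to control the real locus, and use the local product structure of the $\C$-hypercomplex manifold $\Reg(\tilde X)$ (\S\ref{chc-mflds}) to exclude a positive-dimensional class accumulating at a singular point. Condition (iii) I would treat in parallel, exploiting that $\Phi_\zeta$ is an analytic covering that is étale over the open set of smooth points $\Phi_\zeta(\Reg(X))$.

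Finally, for the last assertion, the evaluation $e\colon\tilde X\times\oP^1\to Z$, $(C,\zeta)\mapsto C(\zeta)$, is holomorphic and surjective (surjectivity of $\Phi_\zeta$ onto $Z_\zeta^\R$ gives surjectivity onto each fibre $Z_\zeta$), and its fibres are precisely the classes of the relation $R$. Hence $e$ realises $Z$ as the quotient $(\tilde X\times\oP^1)/R$ compatibly with $\pi$ and $\sigma$; since $Z$ is reduced, its structure sheaf agrees with the sheaf of $R$-invariant holomorphic functions, so $Z\cong Z(\tilde X)$ and $Z$ is a twistor space for $X$.
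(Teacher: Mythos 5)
Your overall route is the paper's route: complexify $X$ inside $\Gamma(Z)$, define $R_\zeta$ by incidence in the fibre $Z_\zeta$, verify Definition \ref{hc-space}, and identify $Z$ with the quotient at the end; your normal-bundle computation at $C\in\Reg(X)$ (rank $2n$, $h^0=4n$, no nonzero real section vanishing at a point, hence $\sN_{C/Z}\cong\sO_{\oP^1}(1)^{\oplus 2n}$) is essentially identical to the paper's. But there is a genuine gap, and it sits exactly where you yourself flag ``the main difficulty'': every condition that involves behaviour at $\Sing(X)$ is either left as a plan or argued incorrectly. Conditions (iii) and (v) receive only a strategy (``I would combine\dots, I would use\dots''), not a proof. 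Worse, your argument for condition (ii) does not suffice: the kernel computation for $\ev_\zeta$ bounds the dimension of an equivalence class only at points of $\Reg(X)$, where $\Gamma(Z)$ is smooth, whereas (ii) requires dimension exactly $2n$ at \emph{every} point where a class meets $X$ --- in particular at points of $\Sing(X)$, e.g.\ for the class of a singular section itself. There the tangent-space bound is unavailable, and the general fibre-dimension estimates only give the wrong inequality ($\dim\geq 2n$, with possible jumps upward at special points); nothing in your text rules out a class of dimension $>2n$ through a singular point.

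The missing idea --- the actual engine of the paper's proof --- is Remmert's open mapping theorem, and it is telling that you never use the hypothesis that the fibres of $\pi$ are \emph{locally irreducible}. The paper first obtains discreteness of the fibres of $\tilde\Phi_{\zeta,\eta}\colon\tilde X\to Z_\zeta\times Z_\eta$ (condition (v)); then, since $\tilde X$ is pure $4n$-dimensional and $Z_\zeta\times Z_\eta$ is locally irreducible, Remmert's theorem makes $\tilde\Phi_{\zeta,\eta}$, hence $\tilde\Phi_\zeta$, open; a second application of Remmert's theorem converts openness into pure $2n$-dimensionality of \emph{all} fibres of $\tilde\Phi_\zeta$, which is what proves (ii) over $\Sing(X)$, and the same openness yields $R_\zeta=\ol{R_\zeta|_{\Reg(\tilde X)}}$, a statement you justify only by ``pure-dimensionality and density''. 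Note also that your partial argument for (v) overreaches: the splitting $\sN_{C/Z}\cong\sO_{\oP^1}(1)^{\oplus 2n}$ is established only at (hence, by openness of this condition, near) points of $\Reg(X)$, not at all points of $\Reg(\tilde X)$; complex points of $\Reg(\tilde X)$ accumulating on $\Sing(X)$ are precisely where you cannot assert that $(\ev_\zeta,\ev_\eta)$ is an isomorphism. So the proposal reproduces the easy half of the paper's proof and defers, rather than solves, the half the proposition is really about.
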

\begin{proof} Since $\Phi_\zeta$ is surjective and unbranched away from $\Sing(X)$, $\Sing(X)$ is thin. We consider a neighbourhood $\tilde X$ of $X$ inside the union of those irreducible components of $\Gamma(Z)$ which contain the irreducible components of $X$. $\tilde X$ has pure dimension $4n$. The relations $R_\zeta$ on $\tilde X$ are defined as follows: $(x,y)\in R_\zeta$ if and only if the corresponding sections intersect in the fibre $Z_\zeta=\pi^{-1}(\zeta)$ of $Z$. Conditions (iii) and (iv)  of Definition \ref{hc-space} are automatically satisfied. Condition (v)  follows from the upper-semicontinuity of the dimension of fibres of a holomorphic map. Since $\Phi_\zeta$ is surjective, every equivalence class of $R_\zeta$ intersects $X$, and we  
have to show that its dimension at such an intersection point is $2n$.  Since the fibres of $\tilde\Phi_{\zeta,\eta}:\tilde X\to Z_\zeta\times Z_\eta$, given by the intersection points of a section with the two fibres, are discrete (if $\zeta\neq\eta$), $\dim \tilde X=\dim Z_\zeta\times Z_\eta$. Since $\tilde X$ is pure-dimensional and $Z_\zeta\times Z_\eta$ is locally irreducible,  Remmert's theorem  \cite[Thm.\ II.1.18]{VII} implies that the map $\tilde\Phi_{\zeta,\eta}$ is open. Hence its composition $\tilde\Phi_\zeta$ with the projection onto $Z_\zeta$ is also open, and using    Remmert's theorem again, shows that the equivalence classes are pure $2n$-dimensional. The openness of $\tilde\Phi_{\zeta}:\tilde X\to Z_\zeta$ also implies that the relations $R_\zeta$ are the closures of their restrictions to $\Reg(\tilde X)$. 
It remains to show that these restrictions define a hypercomplex structure on $\Reg(X)$. Since $\sigma({R_\zeta})=R_{-1/\bar\zeta}\,$ for any $\zeta$, it is enough to show that the normal sheaf of a section $C$ corresponding to a point $x\in \Reg(X)$ splits as $\sO_{\oP^1}(1)^{\oplus 2n}$. 
Since $C$ is contained in the manifold $\Reg(Z)$ and the Douady space is smooth at $C$ by assumption, the normal bundle $N$ of $C$ has rank $2n$ and it satisfies  $h^0(C,N)=\dim_\C T_C \tilde X=4n$. Moreover, since $C$ is locally the only  section in $X$, and hence in $\Gamma(Z)^\sigma$,  passing  through a given point of a fibre, we conclude that $H^0(C,N(-2))=0$. It follows that $N\simeq \sO_{\oP^1}(1)^{\oplus 2n}$.\par
The last statement is a direct consequence of the definition of relations $R_\zeta$, $\zeta\in \oP^1$.
\end{proof}
%%%%%
%%%%
\begin{proposition} Let $X$ be a connected hypercomplex space which admits a normal twistor space $Z$ such that the map $\Phi$ is finite.
If  $\Sing(X)$ does not disconnect $X$ locally, then  $\Phi$ is an isomorphism of real analytic spaces. \label{1-1}
\end{proposition}
\begin{proof} The assumptions imply that $\Reg(X)$ is connected and the map $\Phi:\Reg(X)\times S^2\to \Reg(Z)^\R$ is a proper surjective local diffeomorphism, hence a covering. The corresponding finite group $\Gamma$ of deck transformations acts fibrewise on $\Reg(X)\times S^2$, hence triholomorphically on $\Reg(X)$. But then the sections of $\Reg(Z)\to \oP^1$ are given by $\Reg(X)/\Gamma$, which implies that $\Gamma$ is the trivial group. Since $Z$ is normal and  $\Sing(X)$ does not disconnect $X$ locally, 
the classical results of Grauert and Remmert \cite[\S IV.3.2]{VII} imply that $X\times S^2$ is a normal complex space and $\Phi$ is a finite holomorphic mapping. Therefore $\Phi$ is open  \cite[Thm.\ II.1.18]{VII}, hence $1-1$ everywhere, hence a homeomorphism. Using again the normality of $Z$, we conclude that $\Phi$ is an isomorphism \cite[I.15.4]{VII}.
\end{proof}
%%%%
Combining this with Proposition \ref{Z-exist}, we conclude:
\begin{corollary} Let $X$ be a connected hypercomplex space such that $\Sing(X)$ does not disconnect $X$ locally.
If the complexification $\tilde X$ in Definition \ref{hc-space} is normal and the map $\Phi$ is finite, then $X\times S^2$ is a twistor space for $X$ and $\Phi$ is an isomorphism.
\qed
\end{corollary}
%\begin{remark} If the fibres of $Z$ are normal, then we can apply the argument to $\Phi$ restricted to each fibre $X\times\{\zeta\}$, and conclude that $X$ also normal.\end{remark}
%%%%
%\begin{remark} In the setting of the above proposition, the complex space $X\times S^2$ is not a twistor space for $Z$, unless $\Phi$ is a homeomorphism. On the other hand,  it is unlikely, given the assumptions, that $\Phi$ {\em is not} a homeomorphism. Indeed, for every $\zeta\in \oP^1$, the map $\Phi_\zeta:\Reg(X)\to \Reg(Z_\zeta)^\oR$ is a surjective local diffeomorphism, and since it is proper, $\Phi_\zeta$ is a (finite) covering. The corresponding  finite group  $\Gamma_\zeta$ of covering transformations acts on the hypercomplex manifold $\Reg(X)$ holomorphically with respect to the complex structure $I_\zeta$, but not triholomorphically. We thus have a  continuous map from $S^2$ to the space of finite subgroups of $ {\rm Diff}(X)$ (where  $ {\rm Diff}(X)$ is equipped with the compact-open topology, and there is a natural topology on the space of finite subsets of any topological space). Moreover, the value of this map at each point of $S^2$ preserves the cooresponding complex structure.\end{remark}
%%%%
In general, hypercomplex spaces such that the map is $\Phi$ is finite, have better properties. We therefore adopt the following definition.
\begin{definition} A hypercomplex space $X$ is called {\em proper} if the quotient map $X\times S^2\to (X\times S^2)/R^\sigma$ is finite, where $R^\sigma$ denotes the restriction of the relation $R$ to $X\times S^2$.\label{proper}
\end{definition}
%%%%%
\begin{remark} Locally, every hypercomplex space is proper.
\end{remark}
\begin{remark} If $X$ admits a twistor space and is proper, then the map $\Phi:X\times S^2\to Z^\R$, where $Z$ is the finest twistor space, is a real analytic covering (cf.\ \S\ref{ana-spaces}).
\end{remark}
\begin{proposition} Let $X$ be a proper hypercomplex space admitting a twistor space, and let $Z$ be the finest twistor space for $X$.  Then $X$ is isomorphic to the closure of a union of connected components of $\Reg\bigl(\Gamma(Z)^\sigma\bigr)$.
\end{proposition}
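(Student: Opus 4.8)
The plan is to identify $X$ with its image $X'=\phi(X)$ under the injective analytic map $\phi$ of Proposition \ref{Phi}(iv), to show that the regular locus of $X'$ is exactly a union of connected components of $\Reg\bigl(\Gamma(Z)^\sigma\bigr)$, and then to recover $X'$ by taking closures. By Proposition \ref{finitePhi}, $\phi$ is an isomorphism of $X$ onto a real analytic subspace $X'\subseteq\Gamma(Z)^\sigma$. Properness enters through the Remark preceding the statement: it guarantees that $\Phi\colon X\times S^2\to Z^\R$ is a real analytic covering, in particular a finite, hence proper, map. I would use this to conclude that $\phi$ is a closed (proper) map, so that $X'$ is a genuinely \emph{closed} analytic subspace of $\Gamma(Z)^\sigma$, not merely a locally closed one.

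The first geometric step is to locate $\phi(\Reg(X))$ inside $\Reg\bigl(\Gamma(Z)^\sigma\bigr)$. For $x\in\Reg(X)$ the section $C=\phi(x)$ is a twistor line of the hypercomplex manifold $\Reg(X)$, so it lies in the smooth part of $Z$ and has normal bundle $\sO_{\oP^1}(1)^{\oplus 2n}$. Consequently conditions (i)--(iii) of the criterion recalled in \S\ref{hc-mflds} hold at $C$: the section is flat over $\oP^1$, it is a local complete intersection, and the splitting gives $h^0(C,\sN_{C/Z}(-2))=h^1(C,\sN_{C/Z}(-2))=0$. Hence $\Gamma(Z)^\sigma$ is a smooth hypercomplex manifold of real dimension $4n$ near $C$, which shows $\phi(\Reg(X))\subseteq\Reg\bigl(\Gamma(Z)^\sigma\bigr)$.

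Next I would prove the set-theoretic identity $\phi(\Reg(X))=X'\cap\Reg\bigl(\Gamma(Z)^\sigma\bigr)$. The inclusion $\subseteq$ was just established. For $\supseteq$, take $p\in X'\cap\Reg\bigl(\Gamma(Z)^\sigma\bigr)$: then $\Gamma(Z)^\sigma$ is smooth of dimension $4n$ at $p$, while $X'$ is a closed analytic subspace of the same pure dimension $4n$ through $p$; a closed analytic subset of pure dimension $4n$ inside a $4n$-dimensional manifold must be open (a proper analytic subset would drop dimension), so $X'$ coincides with $\Gamma(Z)^\sigma$ near $p$, whence $p\in\Reg(X')$ and $\phi^{-1}(p)\in\Reg(X)$. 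The same dimension comparison shows that near each of its points $\phi(\Reg(X))$ contains a full neighbourhood of smooth points of $\Gamma(Z)^\sigma$, so it is open in $\Reg\bigl(\Gamma(Z)^\sigma\bigr)$; and the identity together with the closedness of $X'$ shows it is also closed there. Being open and closed, $\phi(\Reg(X))$ is a union of connected components of $\Reg\bigl(\Gamma(Z)^\sigma\bigr)$.

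Finally, since $X$ is reduced (its complexification is reduced) of pure dimension $4n$ with $\Sing(X)$ thin, $\Reg(X)$ is dense in $X$, so $X=\overline{\Reg(X)}$; applying the isomorphism $\phi$ and using that $X'$ is closed in $\Gamma(Z)^\sigma$, one obtains $X'=\overline{\Reg(X')}=\overline{\phi(\Reg(X))}$, the closure taken in $\Gamma(Z)^\sigma$. This exhibits $X\cong X'$ as the closure of a union of connected components of $\Reg\bigl(\Gamma(Z)^\sigma\bigr)$, as required. The main obstacle is precisely the closedness of $X'$: the local finiteness used in Proposition \ref{finitePhi} only yields a locally closed image, and it is properness—through the global finiteness of the covering $\Phi$—that upgrades this to a closed analytic subspace and, equally importantly, prevents the selected union of components from accumulating onto extra limit sections outside $X'$.
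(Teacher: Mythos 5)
Your proposal is correct and follows essentially the same route as the paper's own proof: properness of $\Phi$ gives that $\phi$ is proper and hence $X\cong\phi(X)$ is closed in $\Gamma(Z)^\sigma$, the splitting $\sO_{\oP^1}(1)^{\oplus 2n}$ of the normal bundle along sections from $\Reg(X)$ gives openness of $\Reg(X)$ in $\Gamma(Z)^\sigma$, and then open-plus-closed in $\Reg\bigl(\Gamma(Z)^\sigma\bigr)$ yields the claim after taking closures, using that $\Sing(X)$ is thin. The only (harmless) difference is that where the paper implicitly appeals to completeness of the family of sections for openness, you substitute the pure-dimension/identity-theorem argument (a full-dimensional closed analytic subset of a smooth germ is the whole germ), which makes the step more explicit.
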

\begin{proof} Since $\Phi$ is proper, so is the map $\phi:X\to \Gamma(Z)^\sigma$, $\phi(x)(\zeta)=\Phi(x,\zeta)$. Therefore $X$ is a closed subspace of $\Gamma(Z)^\sigma$. Since the normal sheaf of every section in $\Reg(X)$ splits as $\sO_{\oP^1}(1)^{\oplus 2n}$, $\Reg(X)$ is open in $\Gamma(Z)^\sigma$. Hence $\Reg(X)$ is open and closed in $\Reg\bigl(\Gamma(Z)^\sigma\bigr)$ and the claim follows.
\end{proof}
%%%%%
\begin{remark} The closure of a connected component of $\Reg\bigl(\Gamma(Z)^\sigma\bigr)$ is not necessarily an irreducible component of $\Gamma(Z)^\sigma$. However the above proposition implies that $X$ is the intersection of $\Gamma(Z)^\sigma$ with the closure of a union of connected components of $\Reg(\tilde X)\subset \Reg(\Gamma(Z))$. This means  that $X$ is a union of irreducible components of $\Gamma(Z)^\sigma$. 
\end{remark}

Finally, let us define  $\C$-hypercomplex spaces and schemes (cf.\ \S\ref{chc-mflds}):
%%%%
\begin{definition} A  complex space $(X,\sO_X)$ of pure dimension $4n$ is said to be {\em $\C$-hypercomplex} if it is equipped with a $\oP^1$-family  of equivalence relations $R_\zeta$, $\zeta\in \oP^1$, such that:
\begin{enumerate}[label=(\roman*)]
\item for every $x\in X$, $\bigcup_{\zeta\in \oP^1}[x]_\zeta\times\{\zeta\}$, where $[x]_\zeta$ denotes the $R_\zeta$-equivalence class of $x$, is a pure $(2n+1)$-dimensional closed complex subspace of a neighbourhood of $\{x\}\times \oP^1$ in $X\times \oP^1$;
\item every equivalence class $L$ of $R_\zeta$, $\zeta\in \oP^1$,  is the closure of $L\cap \Reg(X)$; 
\item the intersection of $R_\zeta$ with $\Reg(X)\times \Sing(X)$ is empty for every $\zeta\in \oP^1$. Moreover, the  restrictions of $R_\zeta$ to $\Reg(X)$ define there a $\C$-hypercomplex structure;
\item $\bigcap_{\zeta\in \oP^1} R_\zeta$ is the diagonal in $X\times X$;
\item for   every $\zeta\neq \eta$,
 the equivalence classes of $R_\zeta\cap R_\eta$ are discrete.
%\item every $x\in X$ has a neighbourhood $U$ such that the equivalence relation $R$ on $U\times \oP^1$, equal to $R_\zeta$ on $U\times \{\zeta\}$, is analytic and the ringed space  $\bigl(U\times \oP^1)/R,(\sO_{U\times \oP^1)})^R)$ is a complex space. 
\end{enumerate}
\label{Chc-space}
\end{definition}
\begin{remark} We do not require that relations $R_\zeta$ are analytic, i.e.\ that their graphs are closed. In the smooth case, a $\C$-hypercomplex structure without a Hausdorff twistor space will not be given by analytic relations.
\end{remark}
\begin{remark}  {\em $\C$-hypercomplex schemes} are  defined as in Remark \ref{hc-scheme}.\end{remark}
%%%%%

\section{An example\label{H/pm1}}

We consider $\oH/\oZ_2$, where $\oZ_2$ acts via $q\mapsto -q$. As a singular manifold, it is a cone over $\oR {\rm P}^3$. Algebraically, over $\oR$, we can view it  as a semialgebraic variety described by invariants $x_ix_j$, $i,j=0,\dots,3$, i.e.\ the variety of symmetric $4\times 4$ real matrices of rank $\leq 1$ and nonnegative trace. Its Zariski closure is therefore the algebraic variety of  symmetric $4\times 4$ real matrices of rank $\leq 1$, which is the union of two copies of $\oH/\oZ_2$ intersecting at the origin. We shall now describe this variety as a hypercomplex space (in fact, even a hypercomplex variety).
\par
Let $Z$ be the quotient of the twistor space of $\oH$, i.e.\ of the total space of $\sO_{\oP^1}(1)\oplus \sO_{\oP^1}(1)$, by the fibrewise action of $\oZ_2$. It is a hypersurface in the total space of $\sO_{\oP^1}(2)^{\oplus 3}$ defined by $xy=z^2$. Let $\zeta$ be the affine coordinate on $\oP^1$ and $\tilde\zeta=1/\zeta$.  Then $Z$ is obtained by gluing two copies of $\cx\times (\cx^2)/\oZ_2$ with coordinates
$(\zeta,x,y,z)$, $(\tilde\zeta,\tilde x,\tilde y,\tilde z)$, $xy=z^2$, $\tilde x=x/\zeta^2$, $\tilde y=y/\zeta^2$, $\tilde z=z/\zeta^2$, and the 
real structure
$$ (\zeta,x,y,z)\mapsto \bigl(-1/\bar\zeta, \bar y/\bar\zeta^2,\bar x/\bar\zeta^2, -\bar z/\bar\zeta^2.\bigr)$$
If $(a(\zeta),b(\zeta))=(a-\bar b\zeta, b+\bar a\zeta)$ is a real section of $\sO_{\oP^1}(1)\oplus \sO_{\oP^1}(1)$, then \begin{equation}(x(\zeta),y(\zeta),z(\zeta))=\bigl(a(\zeta)^2,b(\zeta)^2,a(\zeta)b(\zeta)\bigr)\label{square}\end{equation} is a real section of $Z$. Let us now look at all real sections of $Z$. These are quadratic polynomials $(x(\zeta),y(\zeta),z(\zeta))$ of the form:
$$ x(\zeta)=x_0+x_1\zeta+x_2\zeta^2,\enskip y(\zeta)=\bar x_2-\bar x_1\zeta+\bar x_0\zeta^2,\enskip z(\zeta)=z_0+r\zeta -\bar z_0\zeta^2\enskip (r\in \R),$$
and satisfying $x(\zeta)y(\zeta)=z(\zeta)^2$. We obtain the following equations on the coefficients:
\begin{gather} x_0\bar x_2= z_0^2\label{1}\\ x_1\bar x_2-x_0\bar x_1=2rz_0\label{2}\\ |x_0|^2+|x_2|^2-|x_1|^2= r^2-2|z_0|^2.\label{3}
\end{gather}
%%%%
%%%
The equation $x(\zeta)y(\zeta)=z(\zeta)^2$  implies that  $x(\zeta),y(\zeta)$ are either scalar multiples of $z(\zeta)$ or they both have double zeros. In the first case, if we write $x(\zeta)=\lambda z(\zeta)$ and $y(\zeta)=\mu z(\zeta)$, then the reality conditions imply $\mu=-\bar\lambda$, and the equation $xy=z^2$ yields $-|\lambda|^2=1$. There are, therefore, no real sections in the first case. 
In the second case, the sections satisfy, in addition, the equation 
\begin{equation} x_1^2-4x_0x_2=0.\label{4}\end{equation} The real analytic space $X$ described by these equations is locally irreducible. We want to show that is a hypercomplex space in the sense of Definition \ref{hc-space}. 
We have a map $\Phi:X\times S^2\to Z$ defined by sending $(x,\zeta)$ to the intersection point of the section with the fibre of $Z$ over $\zeta$ and, according to Proposition \ref{Z-->X}, we have to show that it is surjective with discrete fibres and unbranched away from $\Sing(X)$. The map $\Phi$ is clearly surjective. We shall show that $\Phi$ is finite. It is enough to show this for the restrictions of $\Phi$ to fibres over $S^2\simeq \oP^1$. Owing to the ${\rm SO}(3)$-invariance, it suffices to consider the restriction $\Phi_0$ to $X\times\{0\}$, which is given  by
$$\Phi_0(x_0,x_1,x_2,z_0,r)=(x_0,\bar x_2,z_0).$$ 
Equations \eqref{3} and \eqref{4} show that $\Phi_0$ is two-to-one and proper. We now verify that $\Phi$ is unbranched away from $\Sing(X)\times S^2$.
A direct inspection of the Jacobi matrix of the equations defining  $X$ shows that $\Sing(X)$ consists of the single (reduced) point $x_i=z_0=r=0$, $i=0,1,2$. Similarly, $\Sing(Z)$ equals the zero section in $\sO_{\oP^1}(2)^{\oplus 3}$. Suppose that a section in $X$ meets $\Sing(Z)$. Without loss of generality, we may assume that it meets it at $\zeta=0$. Then $x_0=x_2=z_0=0$  and equations \eqref{3} and \eqref{4} imply that $r=0$ and $x_1=0$. It follows that the branching locus of $\Phi$ is $\Sing(X)\times S^2$. Therefore $X$ is a hypercomplex space.
\par
We can say a bit more about the structure of $X$. Observe that equations \eqref{4} and \eqref{1} imply that $|x_1|^2=4|x_0x_2|=4|z_0|^2$, and then \eqref{3} yields
$$ \bigl(|x_0|-|x_2|\bigr)^2=r^2.
$$
Hence $|x_0|-|x_2|=\pm r$.
Once we know $x_0,x_2,z_0$ and $r$, then $x_1$ is uniquely determined by \eqref{2}  if $x_0\neq 0$ or $x_2\neq 0$, and by \eqref{4} otherwise.
We can therefore write $X=X_-\cup X_+$, where $|x_0|-|x_2|=- r$ on $X_-$ and  $|x_0|-|x_2|= r$ on $X_+$. 
Sections descended from 
$\sO_{\oP^1}(1)\oplus\sO_{\oP^1}(1)$ belong to $X_-$. On the other hand, $X_+$ consists of sections of the form \eqref{square}, where $a(\zeta)=a+\bar b\zeta$, $b(\zeta)=b-\bar a\zeta$ (which corresponds to the hypercomplex structure $-i,-j,k$ on $\oH$). Thus our hypercomplex space $X$ is the union of two copies of $\oH/\oZ_2$ intersecting in a point.
This decomposition is only topological, since the equations $|x_0|-|x_2|= \pm r$ are not even $C^1$.
%%%%%%%%%
%%%%%
\par
Finally, observe that if we write the sections $(a-\bar b\zeta, b+\bar a\zeta)$ or $(a+\bar b\zeta,b-\bar a\zeta)$ of $\sO_{\oP^1}(1)\oplus\sO_{\oP^1}(1)$ using real coordinates $a=x_0+ix_1$, $b=x_2+ix_3$, then we can recover the algebraic variety $X^\prime$ of $4\times 4$ real symmetric matrices of rank $\leq 1$. Indeed the coefficients in \eqref{square} yield the monomials $x_ix_j$, $i\neq j$, and the polynomials $x_i^2-x_j^2$, $i,j=0,\dots,3$. We therefore recover the traceless matrix $B=A-t/4$, where $A\in X^\prime$ and $t=\tr A$. It follows that $X$ is isomorphic to the subvariety  $\{(B,t); B(B+t/4)=0\}$ of $X_0\times \oR$, where $X_0$ are traceless symmetric $4 \times 4$ matrices. This variety is isomorphic to $X^\prime$.
%%%%%
%%%%%
\begin{remark} As mentioned in the introduction, the real algebraic variety consisting of  two copies of $\oH/\oZ_2$ intersecting in a point is also an example 
in Joyce's hypercomplex algebraic geometry (see Case (A) on p.\ 161 in \cite{Joyce}).  On the other hand, its deformation  $X_\lambda$ (Case (B), ibid.), which is homeomorphic to two copies $T^\ast S^2$ glued together along their zero sections, arises as a component  of $\Gamma(Z_\lambda)^\sigma$, where $Z_\lambda$ is a hypersurface in $\sO_{\oP^1}(2)^{\oplus 3}$ given by equation $xy=z^2+\lambda^2$, for  a fixed quadratic polynomial $\lambda(\zeta)$ with antipodal zeros. Every section in $\Sing(X_\lambda)\simeq S^2$ passes through the two singular singular points of $Z_\lambda$. This means that $X_\lambda$ is only a {\em weakly} hypercomplex space.
\end{remark}

\section{Applications}

\subsection{Finite quotients of hypercomplex manifolds}

The example discussed in the previous section indicates how to deal with general finite quotients of hypercomplex manifolds. If $M$ is such a (connected) manifold and $Z$ its twistor space, then $M$ has a natural complexification $M^\cx$ defined as the connected component of $\Gamma(Z)$ containing $M$. If a finite group $G$ acts triholomorphically on $M$, then it acts on $M^\C$ and this action commutes with $\sigma$. A classical theorem of Cartan (cf.\ \cite[Thm. I.13.14]{VII}) implies that $M^\C/G$ is a (normal) complex space. We set $\ol{M/G}=\bigl( M^\C/G\bigr)^\sigma$. The quotient $M/G$ is a closed subset of $\ol{M/G}$. We want to show that $\ol{M/G}$  is a (proper) hypercomplex space.
\par
The group $G$ acts 
fibrewise on $Z$, and the antiholomorphic involution $\sigma:Z\to Z$ is $G$-equivariant. The complex space $Z/G$ is also equipped with a projection $\pi:Z/G\to \oP^1$ and $\sigma$ descends to $Z/G$. In addition, $Z/G$ has normal fibres. It is clear that $\ol{M/G}$ is a closed subspace of $\Gamma(Z/G)^\sigma$ and the natural map $\Phi:\ol{M/G}\times S^2\to (Z/G)^\oR$ is a real analytic covering, unbranched away from $\Sing(\ol{M/G})$. Moreover, the normal bundle of a section of $Z/G$ corresponding to a point in $\Reg(\ol{M/G})$ splits as $\sO_{\oP^1}(1)^{\oplus 2n}$, so that $\Reg(\ol{M/G})$ is open in $\Reg\bigl(\Gamma(Z/G)^\sigma\bigr)$. Proposition \ref{Z-->X} implies that $\ol{M/G}$  is a hypercomplex space (proper, according to Definition \ref{proper}).
 \par
%%%%%
We can describe the connected components of $\Reg(\ol{M/G})$ as follows.  Elements of  $\Reg(\ol{M/G})$ correspond to free $\sigma$-invariant orbits in $M^\C$. Let $m\in M^\C$ be an element of such an orbit and let $\sigma(m)=g_mm$ for a $g_m\in G$. Then $m=\sigma^2(m)=\sigma(g_m m)=g_m\sigma(m)=g_m^2 m$, so that $g_m^2=e$. In addition, if $h\in G$, then $g_{hm}hm=\sigma(hm)=h\sigma(m)=hgm$, so that $g_{hm}=hgh^{-1}$ (since the orbit is free). For any $g\in G$ with $g^2=1$, let $Y_g$ denote the fixed point set of $g\sigma$ on $ (M^\C)^{\rm free}$. Observe that $Y_{g_1}\cap Y_{g_2}=\emptyset$ if $g_1\neq g_2$. Let $S$ denote the  set of connected components of $\bigcup \{Y_g; g^2=1\}$. The above considerations show that $G$ acts on $S$ and the connected components of $\Reg(\ol{M/G})$ are in 1-1 correspondence with $S/G$. In particular: 
%%%%%
\begin{proposition} If $G$ does not have elements of order $2$, then $\ol{M/G}\simeq M/G$ as real analytic spaces.\qed
\end{proposition}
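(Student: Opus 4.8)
The plan is to deduce the proposition from the fact, recorded just above, that $M/G$ is a closed subset of $\ol{M/G}=\bigl(M^\C/G\bigr)^\sigma$: it suffices to show that this closed subset is everything, i.e.\ that the natural (and clearly injective) map $M/G\to\ol{M/G}$ is surjective. A point of $\ol{M/G}$ is a $\sigma$-invariant $G$-orbit $Gm\subset M^\C$, meaning $\sigma(m)\in Gm$, and it lies in the image of $M/G$ exactly when $Gm$ meets $M=(M^\C)^\sigma$. Thus the whole statement reduces to the assertion that \emph{every $\sigma$-invariant orbit in $M^\C$ meets $M$}. As a preliminary I would note that, by Cauchy's theorem, a finite group has no element of order $2$ precisely when its order is odd; so the hypothesis is simply that $|G|$ is odd.

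The key step, and really the only idea needed, is that $\sigma$ restricts to an involution of each such orbit. Since the $G$-action commutes with $\sigma$, for a $\sigma$-invariant orbit one has $\sigma(Gm)=G\sigma(m)=Gm$, so $\sigma$ permutes the finite set $Gm$ and satisfies $\sigma^2=\mathrm{id}$ on it; moreover its fixed points are exactly the points of $Gm\cap(M^\C)^\sigma=Gm\cap M$. Now $|Gm|=[G:\operatorname{Stab}(m)]$ divides $|G|$ and is therefore odd, and an involution of a finite set of odd cardinality must have a fixed point, since its non-fixed points pair up. Hence $Gm\cap M\neq\emptyset$, as required. This argument treats singular points on exactly the same footing as regular ones; on the free locus it recovers the computation already made, since there $\operatorname{Stab}(m)=\{e\}$ forces $g_m^2=e$ (where $\sigma(m)=g_m m$), whence $g_m=e$ because $|G|$ is odd, so $m\in M$.

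With surjectivity established I would conclude as follows: the continuous injection $M/G\to\ol{M/G}$ is now a bijection onto the whole space, so the closed subset $M/G$ coincides with $\ol{M/G}$ and thereby carries its real analytic structure, giving $\ol{M/G}=M/G$. I do not expect a serious obstacle anywhere in this plan; the single point to phrase carefully is the passage from set-theoretic equality to an identification of real analytic spaces, and this is immediate because $M/G$ is given from the outset as a closed subset of the real analytic space $\ol{M/G}$, which we have shown it exhausts.
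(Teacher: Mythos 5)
Your proof is correct, but it takes a genuinely different route from the paper's. The paper derives the proposition (hence the \qed) from its preceding description of $\Reg(\ol{M/G})$: a regular point is a \emph{free} $\sigma$-invariant orbit, freeness gives a well-defined $g_m\in G$ with $\sigma(m)=g_m m$ and $g_m^2=e$, so the absence of $2$-torsion forces $g_m=e$ and every free $\sigma$-invariant orbit meets $M$; the singular points are then absorbed implicitly, using that $\Reg(\ol{M/G})$ is dense ($\Sing$ is thin) and that $M/G$ is closed in $\ol{M/G}$. You instead treat all $\sigma$-invariant orbits at once: by Cauchy's theorem $|G|$ is odd, so each orbit $Gm$ has odd cardinality, and the involution that $\sigma$ induces on this finite set must therefore have a fixed point, i.e.\ $Gm\cap M\neq\emptyset$. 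This parity argument is the substantive difference: the paper's computation $g_m^2=e$ genuinely requires freeness (on a non-free orbit $g_m$ is not well defined, and $m=g_m^2m$ no longer yields $g_m^2=e$), whereas your counting argument needs no such hypothesis. What you gain is a self-contained proof of surjectivity of $M/G\to\ol{M/G}$ that handles singular orbits on the same footing as regular ones and avoids any appeal to density of the regular locus; what the paper's route buys is that the proposition falls out of a structural result of independent interest, namely the identification of the connected components of $\Reg(\ol{M/G})$ with $S/G$. One shared caveat: both you and the paper identify $(M^\C)^\sigma$ with $M$ (a priori the fixed-point set of $\sigma$ in $M^\C$ could contain components other than $M$), so you are not assuming anything beyond what the paper itself does.
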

%%%%
\begin{remark} The idea, that in order to get an interesting structure on a real ``quotient", one has to first complexify the real manifold, is already present in \cite{HHL}. In addition, in both \cite{HHL} and in our construction,  the set of real points of the complex quotient can be larger than the real quotient we were aiming for.
\end{remark}

\subsection{Hyperk\"ahler and hypercomplex cones}

In \cite{BF} a notion of a conical hyperk\"ahler variety has been proposed and hyperk\"ahler deformations of such an object studied. It turns out, once again, that if we want a real analytic variety with a reasonable notion of a hypercomplex structure, then, in general, we cannot avoid taking the union of  two copies of such a conical variety, joined at the common vertex. We begin with a relatively simple special case.
\par
Let $M$ be a connected hypercomplex manifold with a free triholomorphic action of $\oR_{>0}$. If $E$ is the vector field generated by this action, then the vector fields $I_1E,I_2E,I_3E$ span\footnote{$I_iI_j+I_jI_i=-2\delta_{ij}{\rm Id}$ for $i,j=1,2,3$.} a Lie subalgebra of $\Gamma(TM)$ isomorphic to $\so(3)$. Let us assume that this Lie algebra generates a {\em free} action  of ${\rm SU}(2)$ or ${\rm SO}(3)$. We thus have a free action of $\oH^\ast$ or of $\oH^\ast/\oZ_2$ on $M$, such that each orbit is a hypercomplex submanifold. We remark that such hypercomplex manifolds (with  $\oH^\ast/\oZ_2$-action) arise via a bundle construction from quaternionic manifolds \cite{PPSwann}.
\par
For every complex structure $J$, corresponding to a unit imaginary quaternion, the complex manifold $(M,J)$ admits a free holomorphic action of $\cx^\ast$, generated by $E$ and $JE$. Moreover, the complex structure of $(M,J)$ does not depend on $J$.
Let $Z$ be the twistor space of $M$. Associating to each point $m$ in $M$ the twistor space $Z_m$ of its $\oH^\ast$-orbit shows that $Z$ is obtained by gluing two copies of $\cx\times (M,I_1)$ via the identification $(\tilde\zeta,\tilde m)=(\zeta^{-1},\zeta^{-l}.m)$, where $l=1$ (resp.\ $l=2$) if the orbits are isomorphic to $\oH^\ast$ (resp.\ to $\oH^\ast/\oZ^2$).
\par
Suppose now that the complex structure and the holomorphic $\C^\ast$-action on $(M,I_1)$ extend to a complex space $(\bar M,\sO_{\bar M})$, such that $\bar M=M\cup \{o\}$, where $o$ (the vertex of the cone) is the unique fixed point of the $\C^\ast$-action (cf.\ \cite[Prop. 2.1]{BF}). $\bar M$ is locally irreducible, in particular pure-dimensional. We can form a ``twistor space" $\bar Z$ by applying the above gluing construction to $\bar M$ instead of $M$. Every section of $Z_m$ is a section of $Z$, hence of $\bar Z$, and we conclude, as in \S\ref{H/pm1}, that in the case of a $\oH^\ast/\oZ_2$-action,  the irreducible component of $\Gamma(\bar Z)^\sigma$ containing $M$ is homeomorphic to the union of two copies of $\bar M^\oR$ joined at the common vertex $o$, and is, therefore, a proper hypercomplex space. In the case of a free $\oH^\ast$-action, $\bar M^{\oR}$ itself is a hypercomplex space.
\par
We can generalise the above construction to {\em stratified hypercomplex cones}. Let $Y$ be a reduced locally irreducible complex space with a holomorphic $\cx^\ast$-action, which is free except for a unique fixed  point $o\in Y$. Furthermore, suppose that the underlying real analytic space $Y^\oR$ is a stratified space, where each stratum $S_i$ is a hypercomplex manifold with a free $\oH^\ast/\oZ_2$-action, such that with respect to the complex structure $I_1$, $(S_i,I_1)$ is a $\cx^\ast$-invariant complex subspace of $Y$ (and the two $\C^\ast$-actions agree). We can then again form a ``twistor space" $Z$ via the above gluing construction (with $l=2$). The irreducible component $X$ of $\Gamma( Z)^\sigma$, containing $\Reg(Y^\R)$, is a hypercomplex space,  homeomorphic to the union of two copies of $Y$ joined at the common vertex $o$.
\par
We know of at least two classes of such stratified hypercomplex manifolds. One is provided by finite-dimensional hyperk\"ahler quotients of smooth manifolds \cite{DS,May}. If we start with a cone $M$ over a $3$-Sasakian manifold (e.g.\ $\oH^N$) with a tri-Hamiltonian action of a compact Lie group $G$ and perform the hyperk\"ahler quotient $M/\!\!/\!\!/G$ at the level set $0$ (the unique $\Ad G\times {\rm SO}(3)$-invariant point of $\g^\ast\otimes \R^3$), then we obtain a conical stratification of the complex space $(M/\!\!/\!\!/G,I_1)$, satisfying all conditions above, except perhaps the freeness of the $\oH^\ast/\oZ_2$-action. If this condition is also satisfied, then we obtain a hypercomplex space $X$, homeomorphic to the union of two copies $(M/\!\!/\!\!/G,I_1)^{\oR}$ joined at the common vertex. The other case (partially overlapping with the previous one) is the nilpotent cone $\sN$ of a complex semisimple Lie algebra. Owing to the work of Kronheimer \cite{Kron-nilp}, Biquard \cite{Biq}, and Kovalev \cite{Kov}, the affine variety $\sN$ satisfies all conditions imposed on the complex space $Y$. We therefore obtain an integral and proper hypercomplex scheme,   homeomorphic to the union of two copies of $\sN$ joined at the origin. In the case of $\ssl_2(\cx)$, this is of course the example of \S\ref{H/pm1}.

 \end{document}